\newcommand{\C}{\mathbb{C}}
\def\R{\mathcal{R}}
\newcommand{\eq}[1]{\begin{equation}\label{#1}}
\newcommand{\en}{\end{equation}}
\newtheorem{remark}{Remark}[section]
\newtheorem{exe}{Exemple}
\title{Tensor Krylov subspace methods via  the T-product for color  image processing}
\author{M. El Guide \thanks{Centre for Behavioral Economics and Decision Making(CBED), FGSES, Mohammed VI Polytechnic University, Green City, Morocco} \and A. El Ichi\footnotemark [3]\thanks{Department of Mathematics University Mohammed V Rabat, Morocco}   \and K. Jbilou\footnotemark[1] \thanks{LMPA, 50 rue F. Buisson, ULCO Calais, France; Mohammed VI Polytechnic University, Green City, Morocco; jbilou@univ-littoral.fr } \and R. Sadaka \footnotemark[2] }
\begin{document} 

\maketitle 
\begin{abstract}
  The present paper is concerned with  developing tensor iterative Krylov subspace methods to solve large
  multi-linear tensor equations. We use the well known T-product for two tensors to define  tensor global Arnoldi and
  tensor global  Gloub-Kahan bidiagonalization algorithms. Furthermore,  we illustrate how tensor–based global approaches can be exploited
  to solve ill-posed problems arising from
  recovering blurry multichannel
  (color) images  and videos, using the so-called Tikhonov regularization technique, to provide computable approximate
  regularized solutions. We also review a generalized cross-validation and discrepancy principle type  of criterion for the selection of
  the regularization parameter in the Tikhonov regularization. Applications
  to RGB image and video processing are given to demonstrate the efficiency of the algorithms.
\end{abstract}

\begin{keywords} 
   Krylov subspaces, Linear tensor equations,  Tensors, T-product, Video processing.
\end{keywords}

%%=======================================================================

\section{Background and introduction}
The aim of this paper is to solve the following tensor problem

\begin{equation}\label{eq1}
{\mathcal{M}} (\mathscr{X}) = \mathscr{C},
\end{equation}
where  ${\mathcal M}$ is a linear operator  that could be described as 
\begin{equation}\label{eq2}
{\mathcal M} (\mathscr{X}) = \mathscr{A} \ast\mathscr{X},
\end{equation}
or as
\begin{equation}\label{eql}
{\mathcal M} (\mathscr{X}) = \mathscr{A} \ast\mathscr{X}\ast\mathscr{B},
\end{equation}
where $\mathscr{A}$, $\mathscr{X}$, $\mathscr{B}$ and $\mathscr{C}$ are  three-way tensors, leaving the specific dimensions to be defined later,  and $\ast$ is the T-product to be also defined later. To mention but a few applications, problems of these types arise
in  engineering \cite{qllz}, signal processing \cite{lb},  data mining \cite{lxnm}, tensor complementarity problems\cite{lzqlxn}, computer vision\cite{vt1, vt2} and graph analysis \cite{kolda2}.  For those applications, and so many more, one  have to take advantage of this multidimensional structure to build rapid and robust iterative methods for solving large-scale problems. We will then, be  interested
in  developing robust and fast iterative  tensor  Krylov subspace methods under tensor-tensor product framework between third-order
tensors, to solve regularized problems originating from color image and  video processing applications. Standard and global Krylov subspace methods are suitable  when dealing with grayscale images, e.g, \cite{belguide, belguide2, reichel1, reichel2}. However, these methods might be time consuming to numerically solve problems related to multi channel images (e.g. color images, hyper-spectral images and videos). \\

For the Einstein product, both the Einstein tensor global   Arnoldi and Einstein
tensor global  Gloub-Kahan bidiagonalization algorithms have been established \cite{Elguide}, which makes so  natural to generalize these methods using the T-product.  In this paper, we will show that the
tensor-tensor product  between third-order
tensors allows the application of the global iterative methods, such as the global Arnoldi and global Golub-Kahan algorithms. The tensor form of the proposed Krylov methods, together with using the fast
Fourier transform (FFT) to compute the T-product between third-order
tensors can be efficiently implemented on many modern computers and allows to significantly reduce the overall computational complexity. It is also worth mentioning that our approaches can be naturally generalized to higher-order tensors in a recursive manner.

Our paper is organized as follows. We shall first   present in  Section 2 
some symbols and notations used throughout paper. We also recall the concept T-product  between two tensors.  In Section 3, we define tensor global  Arnoldi and tensor global Golub-Kahan algorithms that allow the use of the T-product. Section 4 reviews the adaptation of Tikhonov regularization for  tensor equation (\ref{eq1}) and then proposing a restarting strategy of  the so-called  tensor global GMRES  and tensor global Golub-Kahan approach in connection with Gauss-type quadrature rules to inexpensively  compute  solution of the regularization of (\ref{eq1}).  In Section 5, we give  a tensor formulation in the form of (\ref{eq1}) that  describes the cross-blurring of color image and  then we present a few numerical examples on restoring blurred and noisy color images and videos. Concluding remarks can be found in Section 6.

\section{Definitions and Notations} A tensor is  a multidimensional array of data. The number of indices of a tensor is called modes or ways. 
Notice that a scalar can be regarded as a zero mode tensor, first mode tensors are vectors and matrices are second mode tensor. The order of a tensor is the dimensionality of the array needed to represent it, also known as
ways or modes. 
For a given N-mode (or order-N) tensor $ \mathscr {X}\in \mathbb{R}^{n_{1}\times n_{2}\times n_{3}\ldots \times n_{N}}$, the notation $x_{i_{1},\ldots,i_{N}}$ (with $1\leq i_{j}\leq n_{j}$ and $ j=1,\ldots N $) stands for the element $\left(i_{1},\ldots,i_{N} \right) $ of the tensor $\mathscr {X}$. The norm of a tensor $\mathscr{A}\in \mathbb{R}^{n_1\times n_2\times \cdots \times n_\ell}$ is specified by
\[
\left\| \mathscr{A} \right\|_F^2 = {\sum\limits_{i_1  = 1}^{n_1 } {\sum\limits_{i_2  = 1}^{n_2 } {\cdots\sum\limits_{i_\ell = 1}^{n_\ell} {a_{i_1 i_2 \cdots i_\ell }^2 } } } }^{}.
\] Corresponding to a given tensor $ \mathscr {A}\in \mathbb{R}^{n_{1}\times n_{2}\times n_{3}\ldots \times n_{N}}$, the notation $$ \mathscr {A}_{\underbrace{::\ldots:}_{(N-1)-\text{ times}}k}\; \; {\rm for }  \quad k=1,2,\ldots,n_{N}$$ denotes a tensor in $\mathbb{R}^{n_{1}\times n_{2}\times n_{3}\ldots \times n_{N-1}}$ which is obtained by fixing the last index and is called frontal slice. Fibers are the higher-order analogue of matrix rows and columns. A fiber is
defined by fixing all the indexes  except  one. A matrix column is a mode-1 fiber and a matrix row is a mode-2 fiber. Third-order tensors have column, row and tube fibers. An element $c\in \mathbb{R}^{1\times 1 \times n}$ is called a tubal-scalar of length $n$. More details are found in  \cite{kimler1,kolda1}.  \\

\begin{figure}[!h]
	\begin{center}
		\includegraphics[scale=0.4]{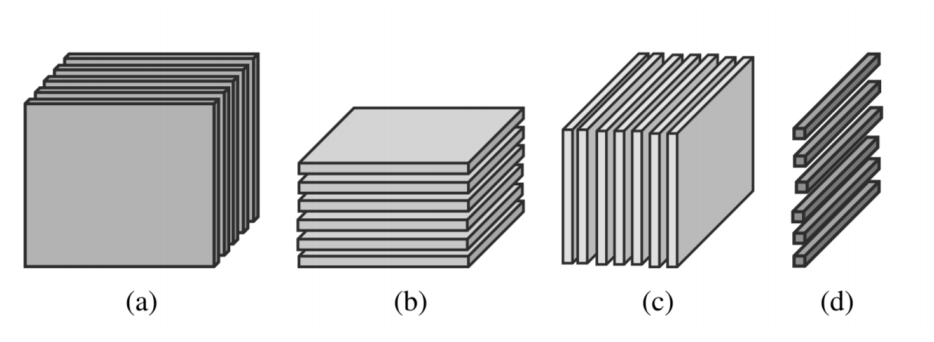}
		\caption{  (a) Frontal, (b) horizontal, and (c) lateral slices of a third   order tensor.   (d) A  mode-3 tube fibers. }
		\label{fig:fibre111}
	\end{center}
\end{figure}	 
\subsection{Discrete Fourier Transformation}
In this subsection we recall some definitions and properties of the discrete Fourier transformation and the T-product. The Discrete Fourier Transformation (DFT) plays a very important role in the definition of the T-product of tensors. The DFT on 
a vector $v \in {\R}^n$ is defined by
\begin{equation}
\label{dft1}
\tilde v= F_n(v) \in {\C}^n,
\end{equation}
where $F_n$ is the matrix defined as 
\begin{equation}
\label{dft2}
F_n(v) = \left (
\begin{array}{ccccc}
1 & 1 & 1 &\ldots&1\\
1 & \omega & \omega^2&\ldots&\omega^{n-1}\\
\vdots & \vdots & \vdots &\ldots&\vdots\\
1 & \omega^{n-1}& \omega^{2(n-1)} &\ldots&\omega^{(n-1)(n-1)}\\
\end{array}
\right) \in {\C}^{n \times n},
\end{equation}
where $\omega=e^{\frac{-2 \pi i}{n}}$ with $i^2=-1$. It is not difficult to show that (see \cite{golub1}) 
\begin{equation}
\label{dft3}
F_n^{*} =\overline F_n,\; {\rm and}\; 	F_n^{*} F_n=F_n F_n^{*} =nI_n.
\end{equation}
Then $F_n^{-1}= \displaystyle \frac{1}{n} \overline F_n$ which show that  $\displaystyle \frac{1}{\sqrt {n}}F_n$ is a unitary matrix.\\ The cost of computing the vector $\tilde v$ directly from \eqref{dft1} is $O(n^2)$. Using the Fast Fourier Transform ({\tt fft}), it will costs $O(nlog(n))$. It is known that 
\begin{equation}\label{dft5}
F_n \, {\rm circ}(v)\, F_n^{-1}  = {\rm Diag}(\tilde v),\\
\end{equation}
which is equivalent to
\begin{equation}\label{dft6}
F_n \, {\rm circ}(v)\, F_n^{*}  = n\,{\rm Diag}(\tilde v),\\
\end{equation}
where
$$
{\rm circ}(v)= \left ( 
\begin{array}{cccc}
v_1 & v_2 & \ldots & v_n\\
v_2 & v_1 & \ldots & v_3\\
\vdots & \vdots  & \ldots & \vdots \\
v_n & v_{n-1} & \ldots & v_1\\
\end{array}
\right ),
$$
and $ {\rm Diag}(\tilde v),$ is the diagonal matrix whose $i$-th diagonal element is $ {\rm Diag}(\tilde v)_i$. 
The decomposition \eqref{dft5} shows that the columns of $F_n$ are the eigenvectors of ${\rm circ}(v))^T$. %and the 
%$ {\rm Diag}(\tilde v)_i$-s are the eigenvalues. \\

\subsection{Definitions and properties of the T-product}
In this part, we briefly review some concepts and notations, which play a central role for the elaboration of the global iterative methods based on T-product; see \cite{Braman, Hao,kilmer0,kimler1} for more details. 	Let $\mathscr {A} \in \mathbb{R}^{n_{1}\times n_{2}\times n_{3}} $ be a third-order tensor, then the operations {\tt bcirc},   {\tt unfold} and {\tt fold} are defined by
$$ {\tt bcirc}(\mathscr {A})=\left( {\begin{array}{*{20}{c}}
	{{A_1}}&{{A_{{n_3}}}}&{{A_{{n_{3 - 1}}}}}& \ldots &{{A_2}}\\
	{{A_2}}&{{A_1}}&{{A_{{n_3}}}}& \ldots &{{A_3}}\\
	\vdots & \ddots & \ddots & \ddots & \vdots \\
	{{A_{{n_3}}}}&{{A_{{n_{3 - 1}}}}}& \ddots &{{A_2}}&{{A_1}}
	\end{array}} \right)   \in {\R}^{ n_1n_2 \times n_2n_3},$$ 
$${\tt unfold}(\mathscr {A} ) = \begin{pmatrix}
A_{1}   \\
A_{2}   \\
\vdots \\
A_{n_{3}}\end{pmatrix} \in \mathbb{R}^{n_{2}n_{3}\times m_{2}} ,  \qquad {\tt fold}({\tt unfold}(\mathscr {A}) ) =  \mathscr {A}.$$ Let $\widetilde {\mathscr{A}}$ be the tensor obtained by applying the DFT on all the tubes of the tensor $\mathscr {A}$. With the Matlab command ${\tt fft}$, we have
$$\widetilde {\mathscr{A}}= {\tt fft}(\mathscr {A},[ ],3), \; {\rm and }\;\; {\tt ifft} (\widetilde {\mathscr{A}}, [ ],3)= \mathscr {A},$$
where ${\tt ifft}$ denotes the Inverse Fast Fourier Transform.\\
Let ${\bf A}$ be the matrix 
\begin{equation}\label{dft9}
{\bf A}= \left (
\begin{array}{cccc}
{A}^{(1)}& &&\\
& {A}^{(2)}&&\\
&&\ddots&\\
&&&{A}^{(n_3)}\\
\end{array}
\right),
\end{equation}
and the matrices ${A}^{(i)}$'s are the frontal slices of the tensor ${\widetilde {\mathscr{A}}}$.\\
The block circulant matrix ${\tt bcirc}(\mathscr {A})$ can also be block diagonalized by using the DFT and this gives
\begin{equation}\label{dft8}
(F_{n_3} \otimes I_{n_1})\, {\tt bcirc}(\mathscr {A})\, 	(F_{n_3}^{-1} \otimes I_{n_2})={\bf A},
\end{equation}	 	
As noticed in \cite{kimler1,lu}, the diagonal blocks of the matrix ${\bf A}$ satisfy the following property 
\begin{equation}
\label{f1}
\left \{
\begin{array}{ll}
{A}^{(1)} \in {\R}^{n_1 \times n_2}\\
conj({A}^{(i)})= A^{(n_3-i+2)},\\
\end{array}
\right.
\end{equation} 
where 	$conj ({A}^{(i)})$ is the complex conjugate of the matrix ${A}^{(i)}$. Next we recall the definition of the T-product.	

\medskip

\begin{definition}
	The \textbf{T-product} ($\star $) between  two tensors
	$\mathscr {A} \in \mathbb{R}^{n_{1}\times n_{2}\times n_{3}} $ and $\mathscr {B} \in \mathbb{R}^{n_{2}\times m\times n_{3}} $ is an  ${n_{1}\times m\times n_{3}}$ tensor  given by:	
	$$\mathscr {A} \star \mathscr {B}={\rm fold}({\rm bcirc}(\mathscr {A}){\rm unfold}(\mathscr {B}) ).$$
\end{definition}
Notice that from the relation \eqref{dft9}, we can show that the the product $\mathscr {C}= \mathscr {A} \star \mathscr {B}$ is equivalent to $  {\bf C}= {\bf A}\,{\bf B}$. So, the efficient way to compute the T-product is to use Fast Fourier Transform (FFT). 
Using the relation \eqref{f1}, the following algorithm allows us to compute in an efficient way the T-product of the tensors $\mathscr {A}$ and 
$\mathscr {B}$, see \cite{lu}.\\

\begin{algorithm}[!ht]
	\caption{Computing the  T-product via FFT}\label{algo1}
	Inputs: $\mathscr {A} \in \mathbb{R}^{n_{1}\times n_{2}\times n_{3}} $ and $\mathscr {B} \in \mathbb{R}^{n_{2}\times m\times n_{3}} $\\
	Output: $\mathscr {C}= \mathscr {A} \star \mathscr {B}  \in \mathbb{R}^{n_{1}\times m \times n_{3}} $
	\begin{enumerate}
		\item Compute $\mathscr {\widetilde A}={\tt fft}(\mathscr {A},[ ],3)$ and $\mathscr {\widetilde B}={\tt fft}(\mathscr {B},[ ],3)$.
		\item Compute each frontal slices of $\mathscr {\widetilde C}$ by\\
		$$C^{(i)}= \left \{
		\begin{array}{ll}
		A^{(i)} B^{(i)} , \quad \quad\quad  i=1,\ldots,\lfloor \displaystyle \frac{{n_3}+1}{2} \rfloor\\
		conj({C}^{(n_3+i-2)}),\quad \quad i=\lfloor \displaystyle \frac{{n_3}+1}{2} \rfloor+1,\ldots,n_3 .\\
		\end{array}
		\right.$$
		\item Compute $\mathscr {C}={\tt ifft}(\widetilde {C},[],3)$.	 	
	\end{enumerate}
\end{algorithm}

\noindent For the T-product, we have the following definitions

\begin{definition}
	The identity tensor $\mathscr{I}_{n_{1}n_{1}n_{3}} $ is the tensor whose first frontal slice is the identity matrix $I_{n_1,n_1}$ and the other frontal slices are all zeros.
\end{definition}
\medskip		 	
\begin{definition}
	\begin{enumerate}
		\item 	An $n_{1}\times n_{1} \times n_{3}$ tensor $\mathscr{A}$ is invertible, if there exists a tensor $\mathscr{B}$ of order  $n_{1}\times n_{1} \times n_{3}$  such that
		$$\mathscr{A}  \star \mathscr{B}=\mathscr{I}_{ n_{1}  n_{1}  n_{3}} \qquad \text{and}\qquad \mathscr{B}  \star \mathscr{A}=\mathscr{I}_{ n_{1}  n_{1}  n_{3}}.$$
		In that case, we set $\mathscr{B}=\mathscr{A}^{-1}$. 	It is clear that 	$\mathscr{A}$ is invertible if and only if   ${\rm bcirc}(\mathscr{A})$ is invertible.
		\item The transpose of $\mathscr{A}$  is obtained by transposing each of the frontal slices and then reversing the order of transposed frontal slices 2 through $n_3$. 
		\item If $\mathscr {A}$, $\mathscr {B}$ and $\mathscr {A}$ are tensors of appropriate order, then
		$$(\mathscr {A} \star \mathscr {B}) \star \mathscr {C}= \mathscr {A} \star (\mathscr {B} \star \mathscr {C}).$$
		\item Suppose $\mathscr {A}$ and $\mathscr {B}$ are two tensors such $\mathscr {A} \star \mathscr {B}$ and $ \mathscr {B}^T \star \mathscr {A}^T$ are  defined. Then  $$(\mathscr {A} \star \mathscr {B})^T= \mathscr {B}^T \star \mathscr {A}^T.$$
	\end{enumerate}
\end{definition}

\noindent \begin{exe}
	If $\mathscr{A} \in \mathbb{R}^{n_{1}\times n_{2}\times 5}$ and its frontal slices are given by the $n_{1}\times n_{2}$
	matrices $A_{1}, A_{2}, A_{3}, A_{4}, A_{5}$, then
	$$\mathscr{A}^{T} ={\tt fold} \begin{pmatrix}
	A_{1}^{T}   \\
	A_{5}^{T} \\
	A_{4}^{T} \\
	A_{3}^{T}\\
	A_{2}^{T} \\
	\end{pmatrix}.$$
	
\end{exe}

\begin{definition} Let 	$\mathscr{A}$ and $\mathscr{B}$ two tensors in $\mathbb{R}^{n_1 \times n_2 \times n_3}$. Then
	\begin{enumerate}
		\item The scalar inner product is defined by
		$$\langle \mathscr{A}, \mathscr{B} \rangle = \displaystyle \sum_{i_1=1}^{n_1} \sum_{i_2=1}^{n_2}  \sum_{i_3=1}^{n_3} a_{i_1 i_2 i_3}b_{i_1 i_2 i_3}.$$
		\item The norm of $\mathscr{A}$ is defined by
		$$ \Vert \mathscr{A} \Vert_F=\displaystyle \sqrt{\langle  \mathscr{A} ,  \mathscr{A}  \rangle}.$$
	\end{enumerate}
\end{definition}
\medskip
\begin{remark}
	Another interesting way for computing the scalar product and the associated norm is as follows:
	$$\langle \mathscr{A}, \mathscr{B} \rangle = \displaystyle \frac{1}{n_3}  \langle  {\bf A}, {\bf B} \rangle;\; \; \Vert \mathscr{A} \Vert_F= \displaystyle \frac{1}{\sqrt{n_3}} \Vert {{\bf A}} \Vert_F,$$
	where the block diagonal matrix ${\bf A}$  is defined by \eqref{dft9}.
\end{remark}

\begin{definition}
	An $n_{1}\times n_{1} \times n_{3}$ tensor  $\mathscr{Q}$  is orthogonal if
	$$\mathscr{Q}^{T}   \star  \mathscr{Q}=\mathscr{Q} \star \mathscr{Q}^{T}=\mathscr{I}_{ n_{1}  n_{1}  n_{3}}.$$
\end{definition}

\begin{lemma}
	If $\mathscr{Q}$ is an orthogonal tensor, then
	$$ \left\|\mathscr{Q} \star \mathscr{A}\right\|_F= \left\| \mathscr{A}\right\|_F.$$
\end{lemma}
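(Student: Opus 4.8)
The plan is to pass to the Fourier/block\text{-}diagonal picture of \eqref{dft8}--\eqref{dft9} and the Remark, in which the $T$\text{-}product becomes ordinary (block) matrix multiplication and the tensor Frobenius norm is, up to the fixed scalar $1/\sqrt{n_3}$, the ordinary matrix Frobenius norm. Throughout, write ${\bf A}$, ${\bf Q}$, etc.\ for the block\text{-}diagonal matrices built from the frontal slices of $\widetilde{\mathscr A}$, $\widetilde{\mathscr Q}$ as in \eqref{dft9}.

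\textbf{Step 1 (translate the product).} Recall, as noted just after the definition of the $T$\text{-}product, that $\mathscr C=\mathscr A\star\mathscr B$ is equivalent to ${\bf C}={\bf A}\,{\bf B}$. Applying this to $\mathscr Q\star\mathscr A$, the block\text{-}diagonal matrix attached to $\mathscr Q\star\mathscr A$ is exactly ${\bf Q}\,{\bf A}$.

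\textbf{Step 2 (translate the transpose).} I would show that the block\text{-}diagonal matrix attached to $\mathscr Q^{T}$ is the conjugate transpose ${\bf Q}^{*}$. One way: ${\tt bcirc}$ is a ring homomorphism with ${\tt bcirc}(\mathscr Q^{T})={\tt bcirc}(\mathscr Q)^{T}$, so conjugate\text{-}transposing \eqref{dft8} and using \eqref{dft3} together with the symmetry of $F_{n_3}$ (whence $(F_{n_3}^{-1})^{*}=\tfrac1{n_3}F_{n_3}$ and $F_{n_3}^{*}=n_3 F_{n_3}^{-1}$) gives $(F_{n_3}\otimes I_{n_1})\,{\tt bcirc}(\mathscr Q^{T})\,(F_{n_3}^{-1}\otimes I_{n_1})={\bf Q}^{*}$; equivalently this is precisely property \eqref{f1} read slice by slice (each frontal slice of $\widetilde{\mathscr Q^{T}}$ is the conjugate transpose of the corresponding slice of $\widetilde{\mathscr Q}$).

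\textbf{Step 3 (conclude).} Since the block\text{-}diagonal matrix of $\mathscr I_{n_1 n_1 n_3}$ is $I_{n_1 n_3}$, the orthogonality hypothesis $\mathscr Q^{T}\star\mathscr Q=\mathscr I_{n_1 n_1 n_3}$ becomes ${\bf Q}^{*}{\bf Q}=I_{n_1 n_3}$ by Steps 1 and 2; hence each frontal slice $Q^{(i)}$ of $\widetilde{\mathscr Q}$ is unitary, so the block\text{-}diagonal matrix ${\bf Q}$ is unitary and left multiplication by it preserves the matrix Frobenius norm, i.e.\ $\|{\bf Q}{\bf A}\|_F=\|{\bf A}\|_F$. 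Applying the Remark twice then yields
$$\|\mathscr Q\star\mathscr A\|_F=\frac1{\sqrt{n_3}}\,\|{\bf Q}{\bf A}\|_F=\frac1{\sqrt{n_3}}\,\|{\bf A}\|_F=\|\mathscr A\|_F,$$
which is the claim.

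I expect the only mildly delicate point to be Step 2, namely matching the tensor transpose (transpose each frontal slice, then reverse slices $2,\dots,n_3$) with conjugate transposition of the blocks $Q^{(i)}$; once the DFT identities \eqref{dft3} and the conjugacy relation \eqref{f1} are invoked this is routine bookkeeping. A purely tensorial alternative would be to first prove the adjoint identity $\langle \mathscr A\star\mathscr B,\mathscr C\rangle=\langle \mathscr B,\mathscr A^{T}\star\mathscr C\rangle$ and then write $\|\mathscr Q\star\mathscr A\|_F^{2}=\langle \mathscr A,\mathscr Q^{T}\star\mathscr Q\star\mathscr A\rangle=\langle\mathscr A,\mathscr A\rangle$, but establishing that adjoint identity requires essentially the same Fourier computation, so the route above seems the most economical given what has already been set up.
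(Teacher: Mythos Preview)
The paper states this lemma without proof, so there is no argument of its own to compare against. Your proof is correct and is in fact the standard one in the $T$-product literature: passing to the Fourier domain via \eqref{dft8}--\eqref{dft9} and the Remark turns the question into unitary invariance of the matrix Frobenius norm. The only nontrivial step is your Step~2, that the block-diagonal matrix attached to $\mathscr{Q}^{T}$ is ${\bf Q}^{*}$; your justification via ${\tt bcirc}(\mathscr{Q}^{T})={\tt bcirc}(\mathscr{Q})^{T}$ (which one checks directly from the definition of the tensor transpose) and then conjugate-transposing \eqref{dft8} using \eqref{dft3} is clean and correct. Nothing is missing.
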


\begin{definition}\cite{kimler1}
	A tensor is called f-diagonal if its frontal slices are orthogonal matrices. It is called upper triangular if all its frontal slices are upper triangular. 
\end{definition}

\begin{definition}\label{bloctens0}\cite{miaoTfunction}{(Block tensor based on T-product)} % voir article Generalized Tensor Function via the Tensor Singular Value page 24
	Suppose $\mathscr{A}   \in {\mathbb R}^{n_{1}\times m_{1} \times n_{3}} $, $\mathscr{B}\in {\mathbb R}^{n_{1}\times m_{2} \times n_{3}}$, $\mathscr{C}   \in {\mathbb R}^{n_{2}\times m_{1} \times n_{3}} $ and   $\mathscr{D}\in {\mathbb R}^{n_{2}\times m_{2} \times n_{3}}$ are four tensors. The block tensor
	$$\left[ {\begin{array}{*{20}{c}}
		{\mathscr{A}}&{\mathscr{B}} \\
		{\mathscr{C}}&{\mathscr{D}} \\
		\end{array}} \right]\in {\mathbb R}^{(n_{1}+n_2)\times (m_1+m_{2}) \times n_{3}} $$
	is defined by compositing the frontal slices of the four tensors.
\end{definition}

\medskip
\noindent Now we introduce the T-diamond tensor product. \\

\begin{definition}%\cite{ElIchi}
	Let
	$\mathscr{A} =[\mathscr{A}_{1},\ldots,\mathscr{A}_{p}]\in {\mathbb R}^{n_{1}\times ps \times n_3},$ where $  \mathscr{A}_{i} \; n_{1}\times s\times n_{3} , \, i =1,...,p$ %\hspace{6cm}
	and let $	\mathscr{B} =[\mathscr{B}_{1},\ldots,\mathscr{B}_{l}]\in {\mathbb R}^{n_{1}\times \ell s \times n_3}$ with $  \mathscr{B}_{j}; n_{1}\times s\times n_{3}, \, j =1,...\ell$. Then,  the product $\mathscr{A}^{T} \diamondsuit  \mathcal{B} $ is the size  matrix $p \times  \ell  $ given by : 
	$$ (\mathscr{A}^{T} \diamondsuit  \mathcal{B})_{i,j}  =   \langle \mathscr {A}_i  ,\mathscr{B}_{j} \rangle  \;\;.   
	$$ 
\end{definition}
\section{Global tensor T-Arnoldi and global tensor T-Golub-Kahan}

\subsection{The tensor T-global GMRES }
Consider now the following tensor linear  system of equations
\begin{equation}\label{syslintens}
\mathscr{A}\star \mathscr{X}=\mathscr{C},
\end{equation}  
where $\mathscr{A}\in \mathbb{R}^{n\times n \times p}$, $\mathscr{C}$ and $ \mathscr{X}\in \mathbb{R}^{n\times s \times p}$. \\ 
\noindent We introduce the  tensor Krylov subspace $\mathcal{\mathscr{TK}}_m(\mathscr{A},\mathscr{V} )$ associated to the T-product, defined for  the pair $(\mathscr{A},\mathscr{V})$   as follows
\begin{equation}
\label{tr3}
\mathcal{\mathscr{TK}}_m(\mathscr{A},\mathscr{V} )= {\rm Tspan}\{ \mathscr{V}, \mathscr{A} \star\mathscr{V},\ldots,\mathscr{A}^{m-1}\star\mathscr{V} \}\\
=\left\lbrace \mathscr{Z} \in \mathbb{R}^{n\times s \times n_3}, \mathscr{Z}= \sum_{i=1}^m \alpha_{i} \left(
\mathscr{A}^{i-1}\star\mathscr{V}\right) \right\rbrace 
\end{equation}
where $\alpha_{i}\in \mathbb{R} $,  $\mathscr{A}^{i-1}\star\mathscr{V}=\mathscr{A}^{i-2}\star\mathscr{A}\star\mathscr{V}$, for $i=2,\ldots,m$ and $\mathscr{A}^{0}$ is the identity tensor. We can now  give a new version of the Tensor T-global Arnoldi algorithm.

\begin{algorithm}[H]
	\caption{Tensor T-global Arnoldi} \label{TGA}
	\begin{enumerate}
		\item 	{\bf Input.} $\mathscr{A}\in \mathbb{R}^{n\times n \times p}$, $\mathscr{V}\in \mathbb{R}^{n\times s \times p}$ and and the positive integer $m$.
		\item Set $\beta=\|\mathscr{V}\|_F$, $\mathscr{V}_{1} =   \dfrac{\mathscr{V}}{  \beta}$
		\item For $j=1,\ldots,m$
		\begin{enumerate}
			\item $\mathscr{W}=  \mathscr{A}\star   \mathscr{V}_j$
			\item for $i=1,\ldots,j$
			\begin{enumerate}
				\item $h_{i,j}=\langle \mathscr{V}_i, \mathscr{W} \rangle$
				\item $\mathscr{W}=\mathscr{W}-h_{i,j}\;\mathscr{V}_i$	
			\end{enumerate}	
			\item End for
			\item   $h_{j+1,j}=\Vert \mathscr {W} \Vert_F$. If $h_{j+1,j}=0$, stop; else
			\item $\mathcal {V}_{j+1}=\mathscr {W}/h_{j+1,j}$.

		\end{enumerate}
		\item End
	\end{enumerate}
\end{algorithm}

\begin{proposition}
	Assume that m steps of Algorithm (\ref{TGA}) have been run. Then, the tensors  $\mathscr{V}_{1},\ldots,\mathscr{V}_{m}$, form an orthonormal basis of the tensor global Krylov  subspace $\mathscr{TK}^{g}_{m}(\mathscr{A},\mathscr{V})$.
\end{proposition}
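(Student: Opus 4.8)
The plan is to prove the statement in two parts: orthonormality of the family $\{\mathscr{V}_1,\ldots,\mathscr{V}_m\}$ with respect to the scalar inner product $\langle\cdot,\cdot\rangle$, and the spanning property $\mathrm{Tspan}\{\mathscr{V}_1,\ldots,\mathscr{V}_j\}=\mathscr{TK}^g_j(\mathscr{A},\mathscr{V})$ for each $j\le m$. Both will be handled by induction on $j$, reading the recurrences directly off Algorithm \ref{TGA}.

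First I would establish orthonormality. The inner loop (c)(i)--(ii) is a modified Gram--Schmidt process: at step $j$ one starts from $\mathscr{W}=\mathscr{A}\star\mathscr{V}_j$ and successively subtracts $h_{i,j}\mathscr{V}_i=\langle\mathscr{V}_i,\mathscr{W}\rangle\mathscr{V}_i$ for $i=1,\ldots,j$. Assuming inductively that $\mathscr{V}_1,\ldots,\mathscr{V}_j$ are orthonormal, a short computation shows that after the loop $\langle\mathscr{V}_i,\mathscr{W}\rangle=0$ for all $i\le j$ (the standard MGS argument, using bilinearity of $\langle\cdot,\cdot\rangle$ and the induction hypothesis to see that earlier subtractions are not disturbed by later ones). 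Since step (d) normalizes $\mathscr{V}_{j+1}=\mathscr{W}/h_{j+1,j}$ with $h_{j+1,j}=\|\mathscr{W}\|_F$ (assuming no breakdown, i.e. $h_{j+1,j}\ne0$), we get $\|\mathscr{V}_{j+1}\|_F=1$ and $\langle\mathscr{V}_i,\mathscr{V}_{j+1}\rangle=0$ for $i\le j$, completing the induction. The base case $j=1$ is just $\mathscr{V}_1=\mathscr{V}/\beta$ with $\beta=\|\mathscr{V}\|_F$, so $\|\mathscr{V}_1\|_F=1$.

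Next I would prove the spanning property, again by induction on $j$. For $j=1$, $\mathscr{V}_1$ is a nonzero scalar multiple of $\mathscr{V}$, so $\mathrm{Tspan}\{\mathscr{V}_1\}=\mathrm{Tspan}\{\mathscr{V}\}=\mathscr{TK}^g_1(\mathscr{A},\mathscr{V})$. For the inductive step, the defining relation of the algorithm, obtained by rearranging steps (a)--(d), is
\[
h_{j+1,j}\,\mathscr{V}_{j+1}=\mathscr{A}\star\mathscr{V}_j-\sum_{i=1}^{j}h_{i,j}\,\mathscr{V}_i .
\]
By the induction hypothesis $\mathscr{V}_1,\ldots,\mathscr{V}_j\in\mathscr{TK}^g_j(\mathscr{A},\mathscr{V})$, hence $\mathscr{A}\star\mathscr{V}_j\in\mathscr{TK}^g_{j+1}(\mathscr{A},\mathscr{V})$ (because $\mathscr{A}\star(\mathscr{A}^{i-1}\star\mathscr{V})=\mathscr{A}^i\star\mathscr{V}$ and the subspace is a linear span closed under such combinations), so the right-hand side lies in $\mathscr{TK}^g_{j+1}$ and therefore so does $\mathscr{V}_{j+1}$. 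This gives $\mathrm{Tspan}\{\mathscr{V}_1,\ldots,\mathscr{V}_{j+1}\}\subseteq\mathscr{TK}^g_{j+1}(\mathscr{A},\mathscr{V})$. For the reverse inclusion, the same relation shows $\mathscr{A}^{j}\star\mathscr{V}=\mathscr{A}\star(\mathscr{A}^{j-1}\star\mathscr{V})$ can be written, using the induction hypothesis that $\mathscr{A}^{j-1}\star\mathscr{V}$ is a combination of $\mathscr{V}_1,\ldots,\mathscr{V}_j$, as a combination of $\mathscr{A}\star\mathscr{V}_1,\ldots,\mathscr{A}\star\mathscr{V}_j$, each of which lies in $\mathrm{Tspan}\{\mathscr{V}_1,\ldots,\mathscr{V}_{j+1}\}$ by the recurrence; together with $\mathscr{V},\mathscr{A}\star\mathscr{V},\ldots,\mathscr{A}^{j-1}\star\mathscr{V}$ already being in that span, we conclude $\mathscr{TK}^g_{j+1}\subseteq\mathrm{Tspan}\{\mathscr{V}_1,\ldots,\mathscr{V}_{j+1}\}$. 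Finally, since the $\mathscr{V}_i$ are orthonormal they are linearly independent, so they form a basis, not merely a spanning set.

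The main obstacle is bookkeeping rather than depth: one must be careful that the notion of "span" used in $\mathscr{TK}^g_m$ is the $\mathbb{R}$-linear span of the tensors $\mathscr{A}^{i-1}\star\mathscr{V}$ (as in \eqref{tr3}), and that the dimension count is consistent — the family can be a basis only while no breakdown $h_{j+1,j}=0$ occurs, so the statement is implicitly conditioned on the algorithm running $m$ full steps, which I would state explicitly. The only genuinely tensor-specific ingredients are the bilinearity of $\langle\cdot,\cdot\rangle$, the associativity $\mathscr{A}\star(\mathscr{A}^{i-1}\star\mathscr{V})=\mathscr{A}^i\star\mathscr{V}$, and the isometry/definiteness of $\|\cdot\|_F$; all are available from the definitions and lemmas recalled above, so the proof is formally identical to the matrix global-Arnoldi case.
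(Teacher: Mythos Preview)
Your proposal is correct and follows the same approach the paper indicates: the paper's own proof is the single sentence ``This can be shown easily by induction on $m$,'' and your write-up simply carries out that induction in detail for both orthonormality (via the modified Gram--Schmidt step) and the spanning property.
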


\medskip
\begin{proof}
	\noindent This can be shown easily by induction on $m$.
	%	will be shown by induction on m, for $m=1$, we have  $ \langle \mathscr{V}_1, \mathscr{V}_1\rangle =1$,  Assume now that the result is true for some m, Then,  we have 
	%	\begin{align*}
	%	(h_{m+1,m})  \langle \mathscr{V}_{j},\mathscr{V}_{m+1}\rangle&= \langle \mathscr{V}_{j},(h_{m+1,m}) \mathscr{V}_{m+1}\rangle\\
	%	&=  \mathscr{V}_{j},  (\mathscr{W} - \sum_{i =1}^{m}h_{i,m} \; \mathscr{V}_{i})     \\
	%	&=   \left(  \langle  \mathscr{V}_{j},\mathscr{W} \rangle    -(\sum_{i =1}^{m }h_{i,m}\star\langle \mathscr{V}_{j},\mathscr{V}_{i} \rangle) \right)    \\
	%	&=   (h_{j,m}-h_{j,m})    =0 
	%	\end{align*}
	%	for all $j=1,\ldots,m$. Furthermore, from line 3(d) in algorithm (\ref{TGA}), we immediately have $\langle \mathscr{V}_{m+1},\mathscr{V}_{m+1}\rangle=1$. Therefore, the
	%	result is true for $m+1$. By the induction principle, the proof is completed.%  and finally we get $\mathbb{V}_m^T\diamondsuit \mathbb{V}_m=\mathscr{I}_{ m m n_{3}}$. 
\end{proof}

\medskip
\noindent Let $\mathbb{V}_{m}  $ be  the $(n\times (sm)\times p)$ tensor with frontal slices $\mathscr{V}_{1},\ldots,\mathscr{V}_{m}$ and let $ {\widetilde{H}}_{m}$ be  the $(m+1)\times m  $ upper  Hesenberg matrix whose elements are the $h_{i,j}$'s defined by Algorithm \ref{TGA}.    Let  $ {H}_{m}$ be the matrix obtained from $\widetilde{ { H}}_{m}$ by deleting its last row;  $H_{.,j}$ will denote the $j$-th column of the matrix  $H_m$ and $\mathscr{A}\star\mathbb{V}_{m}  $ is  the $(n\times (sm)\times p)$ tensor with frontal slices  $\mathscr{A}\star\mathscr{V}_{1},\ldots,\mathscr{A}\star\mathscr{V}_{m}$ respectively given by 
\begin{equation}
\label{ev1}
\mathbb{V}_{m}:=\left[  \mathscr{V}_{1},\ldots,\mathscr{V}_{m}\right], \;\;\; {\rm and}\;\;\; \mathscr{A}\star\mathbb{V}_{m}:=[\mathscr{A}\star\mathscr{V}_{1},\ldots,\mathscr{A}\star\mathscr{V}_{m}].
\end{equation}
We introduce the product  $\circledast$ defined by: $$\mathbb{V}_{m}\circledast y=\sum_{j=1}^{m} {y}_{j}\mathscr{  {V}}_{j},\; y= (y_1,\ldots,y_m)^T\in \mathbb{R}^m.$$
We set the following notation:
\begin{equation*}
\mathbb{V}_{m}\circledast {{ {H}}_{m}}=\left[   \mathbb{V}_m\circledast H_{.,1} ,\ldots,\mathscr{V}_{m}\circledast H_{.,m} \right].
\end{equation*}
Then, it is easy to see that $\forall$   $u \,\text{and}\,  v$  $\in \mathbb{R}^{m}$, we have 
\begin{equation}
\mathbb{V}_{m}\circledast (u+v)=\mathbb{V}_{m}\circledast u + \mathbb{V}_{m}\circledast v \quad \text{and}\quad (\mathbb{V}_{m}\circledast H_m)\circledast u=\mathbb{V}_{m}\circledast(H_m\;u).
\end{equation}
With these notations, we can show the following result (proposition) that will be useful later on.
\medskip

\begin{proposition}\label{normfrobnorm2}
	Let $\mathbb{V}_{m}$ be the tensor defined by  $\left[ \mathscr{V}_{1},\ldots,\mathscr{V}_{m}\right]$ where $\mathscr{V}_{i}\in \mathbb{R}^{n\times s\times p} $ are defined by the Tensor T-global Arnoldi algorithm. Then, we have
	\begin{equation}
	\|\mathbb{V}_{m}\circledast y\|_F=\|y\|_2, \; \forall y= (y_1,\ldots,y_m)^T\in \mathbb{R}^m .
	\end{equation}
\end{proposition}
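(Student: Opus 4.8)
The plan is to expand $\|\mathbb{V}_m \circledast y\|_F^2$ using the definition of the product $\circledast$ and the scalar inner product for tensors, and then invoke the orthonormality of the $\mathscr{V}_i$ established in the preceding proposition. Concretely, by definition $\mathbb{V}_m \circledast y = \sum_{j=1}^m y_j \mathscr{V}_j$, so bilinearity of $\langle \cdot, \cdot \rangle$ gives
\begin{equation*}
\|\mathbb{V}_m \circledast y\|_F^2 = \left\langle \sum_{i=1}^m y_i \mathscr{V}_i, \sum_{j=1}^m y_j \mathscr{V}_j \right\rangle = \sum_{i=1}^m \sum_{j=1}^m y_i y_j \langle \mathscr{V}_i, \mathscr{V}_j \rangle .
\end{equation*}

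The second step is to use the fact that the Tensor T-global Arnoldi algorithm produces an orthonormal family, i.e. $\langle \mathscr{V}_i, \mathscr{V}_j \rangle = \delta_{ij}$ (unit $F$-norm from the normalization steps, pairwise orthogonality from the modified Gram--Schmidt loop; this is exactly what the earlier proposition asserts). Substituting this into the double sum collapses it to $\sum_{i=1}^m y_i^2 = \|y\|_2^2$, and taking square roots yields $\|\mathbb{V}_m \circledast y\|_F = \|y\|_2$, which is the claim.

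Since everything reduces to bilinearity of the inner product plus orthonormality, there is really no serious obstacle here; the only point requiring a word of care is to confirm that $\langle \cdot, \cdot \rangle$ is genuinely bilinear over $\mathbb{R}$ and that the scalar $y_j$ can be pulled out of each slot — both immediate from the componentwise definition $\langle \mathscr{A}, \mathscr{B} \rangle = \sum_{i_1,i_2,i_3} a_{i_1 i_2 i_3} b_{i_1 i_2 i_3}$. One could equivalently phrase the argument via the block-diagonal identity $\langle \mathscr{A}, \mathscr{B} \rangle = \tfrac{1}{n_3}\langle \mathbf{A}, \mathbf{B}\rangle$ from the Remark, but that detour is unnecessary. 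I would therefore present the proof as the two displayed lines above, preceded by a one-sentence appeal to the orthonormality proposition.
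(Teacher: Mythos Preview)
Your proof is correct and follows essentially the same approach as the paper's own proof: expand $\mathbb{V}_m\circledast y$ as $\sum_j y_j\mathscr{V}_j$, write the squared Frobenius norm as an inner product, and use the orthonormality of the $\mathscr{V}_i$'s to reduce to $\sum_j y_j^2=\|y\|_2^2$. Your write-up is in fact slightly more explicit (you display the double sum and the $\delta_{ij}$ step), but the argument is identical in substance.
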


\begin{proof}
	From the definition of  the  product  $\circledast$,  we have $\sum_{j=1}^{m} {y}_{j}\mathscr{  {V}}_{j}=\mathbb{V}_{m}\circledast y$. Therefore,  
	$$\|\mathbb{V}_{m}\circledast y\|_F^2= \left< \sum_{j=1}^{m} {y}_{j}\mathscr{  {V}}_{j},\sum_{j=1}^{m} {y}_{j}\mathscr{  {V}}_{j} \right>_F.$$
	But, since the tensors $ \mathscr{  {V}}_{i}$'s are orthonormal, it follows that
	$$\|\mathbb{V}_{m}\circledast y\|_F^2= \sum_{j=1}^{m} {y}_{j}^2={\|y\|_2^2},$$
	which shows the result.
	
\end{proof}

\noindent With the above notations, we can easily prove the  results of the following proposition :

\begin{proposition}\label{T-GlobalArnolproposit}
	Suppose that m steps of Algorithm \ref{TGA} have been   run. Then, the following statements hold:
	\begin{eqnarray}
	\mathscr{A}\star\mathbb{V}_{m}&=&\mathbb{V}_{m}\circledast {{ {H}}_{m}} +  h_{m+1,m}\left[  \mathscr{O}_{n\times s\times p},\ldots,\mathscr{O}_{n\times s\times p},\mathscr{V}_{m+1}\right],\\
	\mathscr{A}\star\mathbb{V}_{m}&=&\mathbb{V}_{m+1}   \circledast \widetilde{ { H}}_{m}, \\
	\mathbb{V}_{m}^{T}\diamondsuit\mathscr{A}\star\mathbb{V}_{m}&=& {H}_{m}, \\	
	\mathbb{V}_{m+1}^{T}\diamondsuit  \mathscr{A}\star\mathbb{V}_{m}&=&\widetilde{ { H}}_{m},\\
	\mathbb{V}_{m}^{T} \diamondsuit\mathbb{V}_m&=& {I}_{ m  },
	\end{eqnarray}
	where ${I}_{ m  }$ the identity matrix and $\mathscr{O}$ is the tensor having all its entries equal to zero.
\end{proposition}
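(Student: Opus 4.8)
The plan is to read off everything from the Arnoldi recurrence in step 3 of Algorithm~\ref{TGA}, stack the resulting block identities, and then test them against the basis tensors using the $\diamondsuit$ product.

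First I would establish the block-column version of the recurrence. Fix $j\in\{1,\ldots,m\}$. After the inner orthogonalization loop the working tensor equals $\mathscr{W}=\mathscr{A}\star\mathscr{V}_j-\sum_{i=1}^{j}h_{i,j}\mathscr{V}_i$, and step (3e) gives $\mathscr{W}=h_{j+1,j}\mathscr{V}_{j+1}$; hence
\begin{equation*}
\mathscr{A}\star\mathscr{V}_j=\sum_{i=1}^{j+1}h_{i,j}\mathscr{V}_i=\sum_{i=1}^{m}(H_m)_{i,j}\mathscr{V}_i+h_{j+1,j}\mathscr{V}_{j+1},
\end{equation*}
where the convention $h_{i,j}=0$ for $i>j+1$ is exactly the Hessenberg structure of $\widetilde H_m$. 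Recognizing $\sum_{i=1}^{m}(H_m)_{i,j}\mathscr{V}_i=\mathbb{V}_m\circledast H_{.,j}$ as the $j$-th block of $\mathbb{V}_m\circledast H_m$, and collecting the blocks $j=1,\ldots,m$ (recall $\mathscr{A}\star\mathbb{V}_m$ is the concatenation of the $\mathscr{A}\star\mathscr{V}_j$), yields the first identity, since the extra term $h_{j+1,j}\mathscr{V}_{j+1}$ survives only in the last block ($j=m$). Absorbing that last term into the enlarged basis $\mathbb{V}_{m+1}$ and the enlarged matrix $\widetilde H_m$ gives the second identity: the $j$-th block of $\mathbb{V}_{m+1}\circledast\widetilde H_m$ is $\sum_{i=1}^{m+1}(\widetilde H_m)_{i,j}\mathscr{V}_i=\sum_{i=1}^{j+1}h_{i,j}\mathscr{V}_i=\mathscr{A}\star\mathscr{V}_j$.

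Next I would dispatch the fifth identity directly: by the orthonormality of $\mathscr{V}_1,\ldots,\mathscr{V}_m$ established in the first Proposition, $(\mathbb{V}_m^{T}\diamondsuit\mathbb{V}_m)_{i,j}=\langle\mathscr{V}_i,\mathscr{V}_j\rangle=\delta_{ij}$, i.e.\ $\mathbb{V}_m^{T}\diamondsuit\mathbb{V}_m=I_m$. For the third identity I would apply $\mathbb{V}_m^{T}\diamondsuit$ to the first identity. Since $\langle\cdot,\cdot\rangle$ is bilinear, the $(i,j)$ entry of $\mathbb{V}_m^{T}\diamondsuit(\mathbb{V}_m\circledast H_m)$ is $\big\langle\mathscr{V}_i,\sum_{k=1}^{m}(H_m)_{k,j}\mathscr{V}_k\big\rangle=\sum_{k=1}^{m}(H_m)_{k,j}\langle\mathscr{V}_i,\mathscr{V}_k\rangle=(H_m)_{i,j}$, while the last term contributes $h_{m+1,m}\langle\mathscr{V}_i,\mathscr{V}_{m+1}\rangle=0$ for $i\le m$; hence $\mathbb{V}_m^{T}\diamondsuit(\mathscr{A}\star\mathbb{V}_m)=H_m$. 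The fourth identity follows the same way by applying $\mathbb{V}_{m+1}^{T}\diamondsuit$ to the second identity and invoking the orthonormality of $\mathscr{V}_1,\ldots,\mathscr{V}_{m+1}$.

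The only genuine subtlety---hardly an obstacle---is bookkeeping: one must consistently read the ``frontal slices'' of $\mathbb{V}_m$ as the mode-2 concatenation of the blocks $\mathscr{V}_i\in\mathbb{R}^{n\times s\times p}$, track which $h_{i,j}$ vanish by the Hessenberg structure, and use the linearity of $\circledast$ in its matrix argument together with the bilinearity of $\langle\cdot,\cdot\rangle$ when pushing $\diamondsuit$ through $\circledast$. Once those conventions are fixed, all five identities are immediate consequences of the single block-column recurrence derived in the first step.
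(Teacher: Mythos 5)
Your proposal is correct and follows essentially the same route as the paper: read the recurrence $\mathscr{A}\star\mathscr{V}_j=\sum_{i=1}^{j+1}h_{i,j}\mathscr{V}_i$ off Algorithm \ref{TGA}, compare the $j$-th blocks of $\mathscr{A}\star\mathbb{V}_m$ and $\mathbb{V}_{m+1}\circledast\widetilde{H}_m$ for the first two identities, and obtain the remaining three from the $\diamondsuit$ product together with orthonormality of the $\mathscr{V}_i$. You merely spell out in detail (via bilinearity of the inner product) what the paper dismisses with ``the other relations follow from the definition of the T-diamond product,'' which is a welcome but not substantively different elaboration.
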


\medskip
\begin{proof}
	From Algorithm \ref{TGA}, we have $ \mathscr{A}\star\mathscr{V}_{j}=\sum_{i =1}^{j+1}h_{i,j}  \mathscr{V}_{i}$.
	Using the fact that  $\mathscr{A}\star\mathbb{V}_{m}=\left[\mathscr{A}\star\mathscr{V}_{1},\ldots,\mathscr{A}\star\mathscr{V}_{m}\right]$,  
	the $j$-th frontal slice    of $\mathscr{A}\star\mathbb{V}_{m}$ is given by 
	\begin{align*}
	(\mathscr{A}\star\mathbb{V}_{m})_j=\mathscr{A}\star\mathscr{V}_{j}&=\sum_{i =1}^{j+1}h_{i,j} \mathscr{V}_{i}.	 
	\end{align*}
	Furthermore, from the definition of the $\circledast$ product, we have 
	\begin{align*}
	(\mathbb{V}_{m+1}   \circledast \widetilde{ { H}}_{m})_j&=\mathbb{V}_{m+1}\circledast H_{.,j},\\
	&= \sum_{i =1}^{j+1}h_{i,j} \mathscr{V}_{i},
	\end{align*}
	which proves the first two relations. The other relations follow from the definition of T-diamond product
\end{proof} 

\medskip
\noindent In the sequel, we develop the tensor T-global  GMRES algorithm  for solving the problem \eqref{syslintens}. It could be considered as generalization of the well known global GMERS algorithm \cite{jbilou1}. 
Let $\mathscr{  {X}}_{0}\in \mathbb{R}^{n\times s\times p}$ be an arbitrary initial guess with   the corresponding  residual
$\mathscr{R}_0=\mathscr{C}-\mathscr{A}\star \mathscr{X}_0$.    The aim of tensor  T-global GMRES method is to find and approximate solution  $\mathscr{X}_{m}$ approximating the exact solution $\mathscr{X}^*$ of \eqref{syslintens}  such that 
\begin{equation}
\label{gmres1}
\mathscr{X}_{m}-\mathscr{X}_{0}\in \mathscr{TK}^{g}_{m}(\mathscr{A},\mathscr{R}_0), 
\end{equation} 
with the classical  minimization property 
\begin{equation}
\label{gmres2} 
\Vert \mathscr{R}_{m}\Vert_F = \displaystyle \min_{ \mathscr{X} \in \mathscr{X}_{0} + \mathscr{TK}^{g}_{m}(\mathscr{A},\mathscr{R}_0)}\left\lbrace \|\mathscr{C}-\mathscr{A}\star \mathscr{X}\|_F
\right\rbrace.
\end{equation} 

\noindent Let   $\mathscr{X}_{m}=\mathscr{X}_{0}+\mathbb{V}_{m}\circledast y  $ with $ {y} \in \mathbb{R}^m $,   be the approximate solution satisfying \eqref{gmres1}. Then, 
\begin{align*}
\mathscr{R}_m=&\mathscr{C}-\mathscr{A}\star\mathscr{X}_{m},\\
=& \mathscr{C}-\mathscr{A}\star\left(\mathscr{X}_{0}+ \mathbb{V}_{m}\circledast y\right), \\
=& \mathscr{C}-\mathscr{A}\star\mathscr{X}_{0}-\mathscr{A} \star(\mathbb{V}_{m}\circledast y),\\
=&\mathscr{R}_{0}-\left(\mathscr{A} \star\mathbb{V}_{m}\right)\circledast y. 
\end{align*}
It follows then that 
\begin{align*}
\|\mathscr{R}_m \|_{F}&=  \displaystyle \min_{ y\in \mathbb{R}^{m }}
\|\mathscr{R}_{0}-(\mathscr{A}\star\mathbb{V}_{m})\circledast y\|_F,
\end{align*}
where $\mathscr{A}\star\mathbb{V}_{m}:=[\mathscr{A}\star\mathscr{V}_{1},\ldots,\mathscr{A}\star\mathscr{V}_{m}]
$ is the $(n\times sm\times p)$ tensor defined earlier.\\
\noindent Using  Propositions \ref{normfrobnorm2} and the fact that      $\mathscr{R}_{0}=\|\mathscr{R}_{0}\|_F \mathscr{V}_1 $  with  $\mathscr{V}_1 = \mathscr{V}_{m+1}\circledast e_{1}$, where $e_{1}$  the first  canonical basis vector in $\mathbb{R}^{m+1}$,  we get 
\begin{align*}
\|\mathscr{R}_{0}-(\mathscr{A}\star\mathbb{V}_{m})\circledast y\|_F&=\| \mathscr{R}_{0}- (\mathbb{V}_{m+1}   \circledast \widetilde{ { H}}_{m}) \circledast y  \|_F,\\
&=\|\|\mathscr{R}_{0}\|_F (\mathbb{V}_{m+1}\circledast e_1)- (\mathbb{V}_{m+1}   \circledast \widetilde{ { H}}_{m}) \circledast y \|_F,\\
&=\| \mathbb{V}_{m+1}\circledast (||\mathscr{R}_{0}\|_F e_1-\widetilde{ { H}}_{m}  y) \|_F ,\\
&=\|\; \|\mathscr{R}_{0}\|_F\; e_1-\widetilde{ { H}}_{m}   y \|_2.\\
\end{align*}
Finally, we obtain  
\begin{equation}\label{solutdegmresxm}
\mathscr{X}_{m}=\mathscr{X}_{0}+ \mathbb{V}_{m} \circledast y, 
\end{equation}
where,
\begin{equation}\label{Gmressol}
y=  \text{arg } \min_{{{y}}\in \mathbb{R}^{m }}||\; ||\mathscr{R}_{0}||_F\; e_1-\widetilde{ { H}}_{m}  y) ||_2.
\end{equation}
\subsection{Tensor T-global Golub Kahan algorithm}

\noindent Instead of using the tensor T-global Arnoldi to generate a basis for the projection subspace, we can define T-version of the tensor global Lanczos process. Here, we will use the tensor Golub Kahan algorithm related to the T-product. We notice here that we already defined in \cite{Elguide} another version of the tensor Golub Kahan algorithm by using the $m$-mode or the Einstein products with applications to color image restoration.\\
Let $\mathscr{A} \in \mathbb{R}^{n\times \ell\times p}$  be a tensor and  let  $\mathscr{U}  \in \mathbb{R}^{\ell\times  s \times p}$ and  $\mathscr{V}  \in \mathbb{R}^{n\times  s \times p}$  two other tensors. Then, the Tensor T-global Golub Kahan bidiagonalization algorithm (associated to the T-product)  is defined as follows

\begin{algorithm}[h!]
	\caption{The Tensor T-global Golub Kahan algorithm}\label{TG-GK}
	\begin{enumerate}
		\item {\bf Input.} The tensors $\mathscr {A}$, $\mathscr{V}$, and $\mathscr {U}$ and an integer $m$.
		\item 	Set $\beta_1= \Vert \mathscr{V}\Vert_F$, $\alpha_1= \Vert \mathscr {U}  \Vert_F$, $\mathscr {V}_1=\mathscr {V}/\beta_1$ and 
		$\mathscr {U}_1=\mathscr {U}/\alpha_1$.
		\item for $j=2,\ldots,m$
		\begin{enumerate}
			\item $\widetilde {\mathscr {V}}= \mathscr {A} \star \mathscr {U}_{j-1} -\alpha_{j-1}\mathscr {V}_{j-1}$
			\item $\beta_j=\Vert \widetilde {\mathscr {V}}\Vert_F$ if $\beta_j=0$ stop, else
			\item $\mathscr {V}_j=\widetilde {\mathscr {V}}/\beta_j$
			\item $\widetilde {\mathscr {U}}=\mathscr {A}^T \star \mathscr {V}_j-\beta_j \mathscr{U} _{j-1}$
			\item $\alpha_j=\Vert \widetilde {\mathscr {U}} \Vert_F$
			\item if $\alpha_j=0$ stop, else
			\item $\mathscr {U}_j=\widetilde {\mathscr {U}}/\alpha_j$
		\end{enumerate}
	\end{enumerate}
	
\end{algorithm}

\medskip 

\noindent Let $\widetilde{C}_m$ be the upper bidiagonal $((m+1) \times m  )$ matrix 
$$ \widetilde{ {   {C}}}_m=\left[ \begin{array}{*{20}{c}}
{{{\alpha}_1  }}&{{ }}& &   \\
{\beta}_{2}&{{{\alpha}_2}}&\ddots& \\
&\ddots&\ddots& \\
&   & {\beta}_{m}  &  {\alpha}_{m}\\
&    &       &    {\beta}_{m+1}
\end{array}  \right] 
$$
and let $ {{{C}}}_m$ be the $(m \times m )$ matrix obtain   by deleting  the last row of  $\widetilde{{   {C}}}_m$. We denote by  $C_{.,j}$ will denote the $j$-th column of the matrix  $C_m$. Let  $\mathbb{U}_{m}  $ and $\mathscr{A}\star\mathbb{U}_{m}  $ be the $(\ell\times (sm)\times p)$ and   $(n\times (sm)\times p)$ tensors with frontal slices $\mathscr{U}_{1},\ldots,\mathscr{U}_{m}$ and  $\mathscr{A}\star\mathscr{U}_{1},\ldots,\mathscr{A}\star\mathscr{U}_{m}$, respectively, and let  $\mathbb{V}_{m}  $ and $\mathscr{A}^T\star\mathbb{V}_{m}  $ be the $(n\times (sm)\times p)$ and $(\ell\times (sm)\times p)$  tensors with frontal slices $\mathscr{V}_{1},\ldots,\mathscr{V}_{m}$ and  $\mathscr{A}^T\star\mathscr{V}_{1},\ldots,\mathscr{A}^T\star\mathscr{V}_{m}$, respectively. We set  
\begin{align}
\label{ev12}
\mathbb{U}_{m}:&=\left[  \mathscr{U}_{1},\ldots,\mathscr{U}_{m}\right], \;\;\; {\rm and}\;\;\; \mathscr{A}\star\mathbb{U}_{m}:=[\mathscr{A}\star\mathscr{U}_{1},\ldots,\mathscr{A}\star\mathscr{U}_{m}],\\
\mathbb{V}_{m}:&=\left[  \mathscr{V}_{1},\ldots,\mathscr{V}_{m}\right], \;\;\; {\rm and} \;\;\; \mathscr{A}^T\star\mathbb{V}_{m}:=[\mathscr{A}^T\star\mathscr{V}_{1},\ldots,\mathscr{A}^T\star\mathscr{V}_{m}].
\end{align}
\noindent  Then, the following proposition can be established\\

\begin{proposition}\label{proptggkb} 
	The tensors produced by the tensor T-global Golub-Kahan algorithm satisfy the following relations
	\begin{eqnarray} \label{equa20}
	\mathcal {A} \star \mathbb{U}_m& = &\mathbb{V}_{m+1} \circledast {\widetilde { {   {C}}}}_m   ,    \\
	& = &\mathbb{V}_m\circledast{  { {   {C}}}}_m  + {\beta}_{m+1}  \left[  \mathscr{O}_{n\times s\times p},\ldots,\mathscr{O}_{n\times s\times p},\mathscr{V}_{m+1}\right], \\
	%	\begin{eqnarray} \label{equa30}
	\mathscr{A}^{T}\star\mathbb{V}_{m}& = &\mathbb{U}_m  \circledast {\widetilde { {   {C}}}}_m^T .  
	\end{eqnarray}
\end{proposition}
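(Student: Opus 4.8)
The plan is to mimic, in the bidiagonal setting, the bookkeeping that proved Proposition~\ref{T-GlobalArnolproposit}: unwind the recurrences of Algorithm~\ref{TG-GK} slice-by-slice, and match the resulting sums against the definition of the $\circledast$ product applied to the (bi)diagonal matrices $\widetilde{C}_m$ and $C_m$. First I would rewrite the two update steps of the algorithm in closed form. Step (a)--(c) gives $\beta_j\mathscr{V}_j = \mathscr{A}\star\mathscr{U}_{j-1} - \alpha_{j-1}\mathscr{V}_{j-1}$, which rearranges to
\begin{equation*}
\mathscr{A}\star\mathscr{U}_{j-1} = \alpha_{j-1}\mathscr{V}_{j-1} + \beta_j\mathscr{V}_j ,
\end{equation*}
and step (d)--(g) gives $\alpha_j\mathscr{U}_j = \mathscr{A}^T\star\mathscr{V}_j - \beta_j\mathscr{U}_{j-1}$, i.e.
\begin{equation*}
\mathscr{A}^T\star\mathscr{V}_j = \beta_j\mathscr{U}_{j-1} + \alpha_j\mathscr{U}_j .
\end{equation*}
(Reindexing the first identity so that the left-hand side reads $\mathscr{A}\star\mathscr{U}_j$ for $j=1,\ldots,m$ is the only mildly fiddly part; the base case $j=1$ comes from line~2 of the algorithm, where $\mathscr{V}_1$ plays the role of the ``$\beta_1\mathscr{V}_1$'' term and there is no $\alpha_0\mathscr{V}_0$ contribution.)

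Next I would read off the $j$-th frontal slice of each side of the three claimed identities. For the first relation, the $j$-th slice of $\mathcal{A}\star\mathbb{U}_m$ is $\mathscr{A}\star\mathscr{U}_j = \alpha_j\mathscr{V}_j + \beta_{j+1}\mathscr{V}_{j+1}$ by the closed form above, while the $j$-th slice of $\mathbb{V}_{m+1}\circledast\widetilde{C}_m$ is $\mathbb{V}_{m+1}\circledast C_{\cdot,j}$, and since the $j$-th column of the upper bidiagonal matrix $\widetilde{C}_m$ has entry $\alpha_j$ in row $j$ and $\beta_{j+1}$ in row $j+1$ (and zeros elsewhere), this equals $\alpha_j\mathscr{V}_j + \beta_{j+1}\mathscr{V}_{j+1}$ as well. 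The two slices agree for every $j$, which gives the first equation. The second equation is just the first one with the last column split off: $\widetilde{C}_m$ differs from $\left[\begin{smallmatrix}C_m\\ 0\end{smallmatrix}\right]$ only in the $(m+1,m)$ entry $\beta_{m+1}$, so $\mathbb{V}_{m+1}\circledast\widetilde{C}_m = \mathbb{V}_m\circledast C_m + \beta_{m+1}[\mathscr{O},\ldots,\mathscr{O},\mathscr{V}_{m+1}]$ by linearity of $\circledast$. For the third relation, the $j$-th slice of $\mathscr{A}^T\star\mathbb{V}_m$ is $\mathscr{A}^T\star\mathscr{V}_j = \beta_j\mathscr{U}_{j-1} + \alpha_j\mathscr{U}_j$, and the $j$-th column of $\widetilde{C}_m^T$ (an $m\times(m+1)$ matrix, lower bidiagonal in the relevant block) has $\beta_j$ in row $j-1$ and $\alpha_j$ in row $j$, so $\mathbb{U}_m\circledast(\widetilde{C}_m^T)_{\cdot,j} = \beta_j\mathscr{U}_{j-1} + \alpha_j\mathscr{U}_j$, matching slice for slice.

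I expect no genuine obstacle here: the result is a direct transcription of the recurrences, exactly as in the Arnoldi case. The one place to be careful is the indexing bookkeeping around the boundaries---the $j=1$ term in the first recurrence (where the ``$\alpha_0\mathscr{V}_0$'' term is absent and $\beta_1\mathscr{V}_1=\mathscr{V}_1$ comes from the initialization), and the shape conventions for $\widetilde{C}_m$ ($(m+1)\times m$) versus $\widetilde{C}_m^T$ ($m\times(m+1)$) so that the dimensions of $\mathbb{V}_{m+1}$, $\mathbb{U}_m$ line up correctly with the number of columns being combined. I would also note once, at the outset, that the orthonormality of the $\mathscr{V}_i$ and $\mathscr{U}_i$ (which follows by the same induction as in the Arnoldi proposition) is what makes the coefficients in the recurrences consistent, though it is not strictly needed for the three identities themselves---those are purely algebraic consequences of the update formulas. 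With the closed-form recurrences in hand, the proof is a two-line slice comparison for each of the three equations.
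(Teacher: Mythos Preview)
Your proposal is correct and follows essentially the same slice-by-slice strategy as the paper: extract the two recurrences $\mathscr{A}\star\mathscr{U}_{j-1}=\alpha_{j-1}\mathscr{V}_{j-1}+\beta_j\mathscr{V}_j$ and $\mathscr{A}^T\star\mathscr{V}_j=\beta_j\mathscr{U}_{j-1}+\alpha_j\mathscr{U}_j$ from Algorithm~\ref{TG-GK}, then match each frontal slice against the corresponding column of $\widetilde{C}_m$ (resp.\ $\widetilde{C}_m^T$) via the definition of $\circledast$. Your write-up is in fact a bit more explicit than the paper's---you derive the second identity by splitting off the last column, whereas the paper simply remarks on the $j=m$ slice---but the argument is the same.
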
 

\begin{proof}
	Using $\mathscr{A}\star\mathbb{U}_{m}=[\mathscr{A}\star\mathscr{U}_{1},\ldots,\mathscr{A}\star\mathscr{U}_{m}] \in  \mathbb{R}^{n \times (sm)\times n_{3}} $ , the ($j-1$)-th lateral slice    of $(\mathscr{A}\star\mathbb{U}_{m})$ is given by   $$ (\mathcal {A} \star \mathbb{U}_m)_{j-1} =\mathscr {A} \star \mathscr {U}_{j-1}= {\alpha }_{j-1} \mathscr {V}_{j-1}+{\beta }_j \mathcal {V}_{j}.$$
	Furthermore, from the definition of the $\circledast$ product, we have
	\begin{align*}
	(\mathbb{V}_{m+1}   \circledast \widetilde{ { C}}_{m})_{j-1}&=\mathbb{V}_{m+1}\circledast C_{.,j-1},\\
	&= \sum_{i =1}^{j+1}c_{i,j-1} \mathscr{V}_{i},\\
	&={\alpha }_{j-1} \mathscr {V}_{j-1}+{\beta }_j \mathcal {V}_{j}
	\end{align*}
	and for $j=m$,  $\mathbb{U}_{m} \circledast \mathscr{C}_{.,m}= \mathscr{A}\star \mathscr{U}_{m}+{\beta}_{m+1} \mathscr{V}_{m+1} $ and the result follows.\\
	To derive (4.5) , one may first
	notice that from Algorithm \ref{TG-GK}, we  have  $$(\mathcal {A}^T \star \mathbb{V}_m)_{j} =\mathscr {A} ^T\star \mathscr {V}_{j}= {\alpha }_{j} \mathscr {U}_{j}+{\beta }_j \mathcal {U}_{j-1}.$$
	Considering now the $j$-th frontal slice of the right-hand side
	of (4.5), the assertion can be easily deduced .
\end{proof}

\medskip
\begin{proposition}
	Let $\mathscr{  {X}}_{m}=  \mathscr{  {X}}_{0}+\mathbb{U}_{m}\circledast y   \in \mathbb{R}^{\ell\times s\times p}$ with $  {  {y}}\in \mathbb{R}^{m },$ where $\mathbb{U}_{m}$ is obtained from Algorithm \ref{TG-GK},  be an  approximation of   (\ref{syslintens}). Then,  we have
	\begin{equation}
	\|\mathscr{C}-\mathscr{A}\star \mathscr{X}_m\|_F=  \| \beta_{1}e_1- { \widetilde{C}}_m y \|_2,
	\end{equation}
	where  $\beta_{1}=\|\mathscr{C}\|_F$.
\end{proposition}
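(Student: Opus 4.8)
The plan is to reproduce, for the Golub--Kahan setting, the chain of identities already used for the tensor T-global GMRES method, simply replacing the Arnoldi decomposition by the first relation of Proposition~\ref{proptggkb}. Implicit in the statement is that Algorithm~\ref{TG-GK} is run with the starting tensor $\mathscr{V}=\mathscr{C}$ and that $\mathscr{X}_0$ is taken so that $\mathscr{A}\star\mathscr{X}_0=\mathscr{O}$ (e.g. $\mathscr{X}_0=\mathscr{O}$), which is exactly what makes $\beta_1=\|\mathscr{V}\|_F=\|\mathscr{C}\|_F$; the general initial guess is handled verbatim with $\mathscr{C}$ replaced by the residual $\mathscr{R}_0$. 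First I would expand the residual using the linearity of $\star$ and of the product $\circledast$ (the bilinearity relations displayed just before Proposition~\ref{normfrobnorm2}) together with the fact that $\mathscr{A}\star(\mathbb{U}_m\circledast y)=(\mathscr{A}\star\mathbb{U}_m)\circledast y$, which holds because $\mathscr{A}\star\mathbb{U}_m$ is by definition the tensor with frontal slices $\mathscr{A}\star\mathscr{U}_j$:
\begin{align*}
\mathscr{C}-\mathscr{A}\star\mathscr{X}_m
&=\mathscr{C}-\mathscr{A}\star\bigl(\mathscr{X}_0+\mathbb{U}_m\circledast y\bigr)\\
&=\mathscr{C}-\mathscr{A}\star\mathscr{X}_0-(\mathscr{A}\star\mathbb{U}_m)\circledast y
=\mathscr{C}-(\mathscr{A}\star\mathbb{U}_m)\circledast y.
\end{align*}

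Next I would invoke Proposition~\ref{proptggkb}, namely $\mathscr{A}\star\mathbb{U}_m=\mathbb{V}_{m+1}\circledast\widetilde{C}_m$, together with the associativity rule $(\mathbb{V}_{m+1}\circledast\widetilde{C}_m)\circledast y=\mathbb{V}_{m+1}\circledast(\widetilde{C}_m\,y)$, to get
\[
\mathscr{C}-\mathscr{A}\star\mathscr{X}_m=\mathscr{C}-\mathbb{V}_{m+1}\circledast(\widetilde{C}_m\,y).
\]
Since $\mathscr{V}_1=\mathscr{V}/\beta_1=\mathscr{C}/\beta_1$, we have $\mathscr{C}=\beta_1\mathscr{V}_1=\beta_1\,(\mathbb{V}_{m+1}\circledast e_1)$, where $e_1$ is the first canonical basis vector of $\mathbb{R}^{m+1}$, and hence, by linearity of $\circledast$ in its second argument,
\[
\mathscr{C}-\mathscr{A}\star\mathscr{X}_m=\mathbb{V}_{m+1}\circledast\bigl(\beta_1 e_1-\widetilde{C}_m\,y\bigr).
\]
Finally, I would take Frobenius norms and use that $\mathscr{V}_1,\ldots,\mathscr{V}_{m+1}$ form an orthonormal family: the computation in the proof of Proposition~\ref{normfrobnorm2}, applied to this orthonormal set with $m+1$ in place of $m$, gives $\|\mathbb{V}_{m+1}\circledast z\|_F=\|z\|_2$ for every $z\in\mathbb{R}^{m+1}$. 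Taking $z=\beta_1 e_1-\widetilde{C}_m y$ then yields $\|\mathscr{C}-\mathscr{A}\star\mathscr{X}_m\|_F=\|\beta_1 e_1-\widetilde{C}_m y\|_2$, which is the assertion.

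The one point that is not pure bookkeeping is the orthonormality of the left Lanczos tensors $\mathscr{V}_j$, which is what licenses the isometry identity $\|\mathbb{V}_{m+1}\circledast z\|_F=\|z\|_2$. I expect this to be the main (and only) obstacle: it is the standard coupled induction for Golub--Kahan bidiagonalization, showing simultaneously that $\langle\mathscr{V}_i,\mathscr{V}_j\rangle=\delta_{ij}$ and $\langle\mathscr{U}_i,\mathscr{U}_j\rangle=\delta_{ij}$ for the recurrence in Algorithm~\ref{TG-GK}, using $\mathscr{A}^T$ as the adjoint of $\mathscr{A}$ with respect to $\langle\cdot,\cdot\rangle$; once this is in place, every remaining step is a direct manipulation of the operations $\star$ and $\circledast$ and of Proposition~\ref{proptggkb}.
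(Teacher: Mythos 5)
Your proposal follows essentially the same route as the paper's own proof: expand the residual, replace $\mathscr{A}\star\mathbb{U}_m$ by $\mathbb{V}_{m+1}\circledast\widetilde{C}_m$ from Proposition~\ref{proptggkb}, write $\mathscr{C}=\mathbb{V}_{m+1}\circledast(\beta_1 e_1)$, and pull the vector out of the Frobenius norm via the isometry $\|\mathbb{V}_{m+1}\circledast z\|_F=\|z\|_2$. If anything, you are more careful than the paper, which silently assumes $\mathscr{A}\star\mathscr{X}_0=\mathscr{O}$ and uses the orthonormality of the Golub--Kahan tensors $\mathscr{V}_j$ (and hence the analogue of Proposition~\ref{normfrobnorm2}) without comment, both points you explicitly flag.
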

\medskip
\begin{proof}
	Using  representation (\ref{equa20})  and the fact that $\mathscr{C}=\mathbb{V}_{m+1}\circledast (\beta_{1}e_1)$  with  $\beta_{1}=\|\mathscr{C}\|_F$,  we get 
	\begin{align*}
	\|\mathscr{C}-\mathscr{A}\star \mathscr{X}_m\|_F&=\|\mathbb{V}_{m+1}\circledast (\beta_{1}e_1)  - (\mathbb{V}_{m+1} \circledast { \widetilde{C}}_m)\circledast y  |\|_{F},\\
	&=|||\mathbb{V}_{m+1}\circledast (\beta_{1}e_1-{\widetilde{C}}_m y   )||_F,\\
	&=||\beta_{1}e_1- {\widetilde{C}}_m y     ||_2.
	\end{align*}
\end{proof}
\section{Application to discrete-ill posed tensor problems}
We consider the following discrete ill-posed tensor equation
\begin{equation}\label{tr1}
\mathscr{A} \star \mathscr{X}= \mathscr{C},\quad \mathscr{C}=\widehat{ \mathscr C}+  \mathscr{N},
\end{equation}
where $\mathscr{A} \in {\R}^{n \times n \times s}$, $\mathscr{X}$,  $ \mathscr{N}$ (additive noise) and $\mathscr{C}$ are tensors in ${\R}^{n \times s \times p}$. \\
In color image processing, $p=3$, $\mathscr{A}$ represents the blurring tensor, $\mathscr{C}$ the blurry and noisy observed  image,  $\mathscr{X}$ is the image that we would like to restore  and  $\mathscr{N}$ is an unknown additive noise. Therefore, to stabilize  the recovered image, regularization techniques are  needed. There are several techniques to regularize the linear inverse problem given by
equation (\ref{tr1}); for the matrix case, see for example, \cite{belguide,reichel2,golubwahba,hansen1}. All of these techniques stabilize the restoration process by adding a regularization term, depending on some  priori knowledge of  the unknown image. One of the most regularization method is due to Tikhonov and is given as follows 
\begin{equation}\label{tr2}
\underset{\mathscr{X}}{\text{min}}\{\|\mathscr{A} \star \mathscr{X} - \mathscr{C}  \|_F^2+\mu \|\mathscr{X}\|_F^2\}.
\end{equation}
As  problem \eqref{tr1} is large,  Tikhonov regularization \eqref{tr2} may be very expensive to solve. One possibility is instead of regularizing the original problem, we apply the Tikhonov technique to the projected problem \eqref{Gmressol} which leads to the following problem
\begin{eqnarray}
y_{m,\mu}&=& \arg \min_{ y \in \mathbb{R}^{m }}\left( \|\mathscr{R}_0\| e_1-\widetilde{ { H}}_{m}  y   \|_ 2 +\mu \| y \|_2
\right),\\
&=&\arg \min_{ y \in \mathbb{R}^{m }}\left  \Vert \left ( \begin{array}{ll}
\widetilde H_m\\
\mu I_m
\end{array}\right ) y - \left ( \begin{array}{ll}
\beta e_1\label{min}\\
0
\end{array}\right ) 
\right \Vert_2.
\end{eqnarray}
The minimizer $y_{m,\mu}$ can also be computed as the solution of the following normal equations associated with  \eqref{min}
\begin{equation}{\label{tikho2}}
\widetilde H_{m,\mu} y=\widetilde H_m^T, \quad \widetilde H_{m,\mu}= (\widetilde H_m^T \widetilde H_m+ \mu^2 I_m).
\end{equation}
Note that since the Tikhonov problem \eqref{tikho2} is now a matrix one with small dimension as $m$ is generally small, the vector $y_{m,\mu}$, can thereby be inexpensively computed by some techniques such as the GCV method \cite{golubwahba} or the L-curve criterion \cite{hansen1,hansen2,reichel1,reichel2}. To choose the regularization parameter, we can use  the
generalized cross-validation (GCV) method
\cite{golubwahba,wahbagolub}. Now for the GCV method, the regularization
parameter is chosen by minimizing the following function
\begin{equation}\label{gcv}
GCV(\mu)=\frac{\|\widetilde H_m y_{m,\mu}-{\bf
		\beta e_1}\|_2^2}{[tr(I_m-\widetilde H_m  \widetilde H_{m,\mu}^{-1}\widetilde H_m^T)]^2}=\frac{\|(I_m-\widetilde H_m \widetilde H_{m,\mu}^{-1} \widetilde H_m^T){\beta e_1}\|_2^2}{[tr(I_m-H_m H_{m,\mu}^{-1} \widetilde H_m^T)]^2}.
\end{equation}

To minimize (\ref{gcv}), we take advantage of the the SVD decomposition of the low dimensional matrix $\widetilde H_m$ to obtain a more simple and computable expression of $GCV(\mu)$. Consider the SVD decomposition of $\widetilde H_m=U\Sigma V^T$. Then, the GCV is now expressed as (see \cite{wahbagolub})

\begin{equation}
\label{gcv2}
GCV(\mu)=\frac{\displaystyle
	\sum_{i=1}^m\left(\frac{\widetilde
		g_i}{\sigma_i^2+\mu^2}\right)^2}{\displaystyle\left(\sum_{i=1}^m
	\frac{1}{\sigma_i^2+\mu^2}\right)^2},
\end{equation}
where $\sigma_i$ is the $i$th singular value of the matrix
$\widetilde H_m$ and $\widetilde g= \beta_1 U^T e_1$.

In  terms of  practical implementations, it's more convenient to introduce a restarted version of the tensor Global GMRES. This strategy is essentially based on restarting the tensor T-global Arnoldi algorithm. Therefore, at each
restart, the initial guess $\mathscr{X}_0$ and the regularization parameter $\mu$
are updated employing the last values computed when the the number of inner iterations required is fulfilled. We note that as the number  outer iterations increases it is possible to compute the $m$th residual without having to compute extra T-products. This is described in the following proposition.
\begin{proposition}
	At step $m$, the residual $\mathscr{R}_{m}=\mathscr{C}-\mathscr{A}\ast \mathscr{X}_{m}$ produced by the tensor Global GMRES method for tensor equation (\ref{eq1}) has the following expression
	\begin{equation}\label{resex}
	\mathscr{R}_m=\mathbb{V}_{m+1} \circledast\left(\gamma_{m+1}Q_me_{m+1}\right),
	\end{equation}
	
	where $Q_m$ is the unitary matrix obtained from the QR decomposition of the upper Hessenberg matrix $\widetilde{H}_{m}$ and $\gamma_{m+1}$ is the last component of the vector $\left\|\mathscr{R}_{0}\right\|_{F} Q_{m}^{\mathrm{T}} e_{1}$ and $e_{m+1}=(0,0, \ldots, 1)^{\mathrm{T}} \in \mathbb{R}^{m+1}.\\$
	Furthermore,
	\begin{equation}\label{resnrm}
	\left\|\mathscr{R}_{m}\right\|_{F}=\left|\gamma_{m+1}\right|.
	\end{equation}
	
\end{proposition}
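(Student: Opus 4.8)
The plan is to transport the classical GMRES residual computation through the isometry $y\mapsto\mathbb{V}_{m+1}\circledast y$ provided by Proposition \ref{normfrobnorm2}. Recall from the derivation that precedes \eqref{solutdegmresxm} that, for the iterate $\mathscr{X}_m=\mathscr{X}_0+\mathbb{V}_m\circledast y$, one has
$$\mathscr{R}_m=\mathscr{R}_0-(\mathscr{A}\star\mathbb{V}_m)\circledast y=\mathbb{V}_{m+1}\circledast\big(\|\mathscr{R}_0\|_F\,e_1-\widetilde{H}_m\,y\big),$$
where $e_1$ is the first canonical vector of $\mathbb{R}^{m+1}$, and that the GMRES choice of $y$ is the minimizer in \eqref{Gmressol}. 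Hence everything reduces to identifying the vector $\|\mathscr{R}_0\|_F e_1-\widetilde{H}_m y\in\mathbb{R}^{m+1}$ once $y$ is the least-squares solution.

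First I would introduce the QR factorization $\widetilde{H}_m=Q_m R_m$ with $Q_m\in\mathbb{R}^{(m+1)\times(m+1)}$ unitary and $R_m\in\mathbb{R}^{(m+1)\times m}$ upper triangular with vanishing last row; let $R_m^{(m)}$ be its leading $m\times m$ block, which is nonsingular under the non-breakdown hypothesis of Algorithm \ref{TGA} (all $h_{j+1,j}\neq0$ for $j\le m$). Writing $\binom{g_m}{\gamma_{m+1}}:=\|\mathscr{R}_0\|_F\,Q_m^T e_1$ with $g_m\in\mathbb{R}^m$, the unitary invariance of the Euclidean norm gives
$$\big\|\,\|\mathscr{R}_0\|_F e_1-\widetilde{H}_m y\,\big\|_2=\left\|\binom{g_m}{\gamma_{m+1}}-R_m y\right\|_2,$$
and, since the last row of $R_m$ is zero, this is minimized exactly when $R_m^{(m)}y=g_m$, the residual of the projected problem then being $\gamma_{m+1}e_{m+1}$. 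Substituting back, for the GMRES iterate $\|\mathscr{R}_0\|_F e_1-\widetilde{H}_m y=Q_m\big(\|\mathscr{R}_0\|_F Q_m^T e_1-R_m y\big)=\gamma_{m+1}Q_m e_{m+1}$, which, inserted into the formula for $\mathscr{R}_m$ above, yields \eqref{resex}. Finally \eqref{resnrm} follows from Proposition \ref{normfrobnorm2} applied with $m+1$ basis tensors together with the unitarity of $Q_m$: $\|\mathscr{R}_m\|_F=\|\gamma_{m+1}Q_m e_{m+1}\|_2=|\gamma_{m+1}|\,\|Q_m e_{m+1}\|_2=|\gamma_{m+1}|$.

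I do not expect a serious obstacle: the argument is essentially bookkeeping. The only point needing a little care is the characterization of the minimizer $y$ of \eqref{Gmressol} by $R_m^{(m)}y=g_m$, which relies on the invertibility of the leading block of $R_m$, i.e. on the no-breakdown assumption; and, if one wants to fully justify the accompanying remark that $\|\mathscr{R}_m\|_F$ is available ``without extra T-products'', one should track how the QR factorization of $\widetilde{H}_{m-1}$ is updated to that of $\widetilde{H}_m$ by a single Givens rotation as the Arnoldi process advances. For the two stated identities, however, a fixed $m$ suffices and the proof above is complete.
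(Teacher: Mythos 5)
Your proof is correct and follows essentially the same route as the paper: write $\mathscr{R}_m=\mathbb{V}_{m+1}\circledast(\|\mathscr{R}_0\|_F e_1-\widetilde{H}_m y)$, factor $\widetilde{H}_m=Q_m R_m$, use the least-squares characterization of $y$ to reduce the bracket to $\gamma_{m+1}Q_m e_{m+1}$, and conclude with the isometry of $\mathbb{V}_{m+1}\circledast(\cdot)$ and the unitarity of $Q_m$. You are in fact slightly more explicit than the paper (which only says ``since $y$ solves \eqref{Gmressol}'') in justifying, via the nonsingular leading block of $R_m$ under the no-breakdown assumption, why the projected residual is exactly $\gamma_{m+1}e_{m+1}$.
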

\begin{proof}
	At step $m$, the residual $\mathscr{R}_m$ can be expressed as
	$$\mathscr{R}_m=\mathbb{V}_{m+1} \circledast\left(\beta e_{1}-\widetilde{H}_{m} y_{m}\right),$$
	by considering the QR decomposition $\widetilde{H}_{m}=Q_{m}\widetilde{U}_m$ of the $(m + 1) \times m$ matrix $\widetilde{H}_{m}$, we get
	$$\mathscr{R}_m=\left(\mathbb{V}_{m+1} \circledast Q_m\right)\circledast\left(\beta Q_m^T e_{1}-\widetilde{U}_{m} y_{m}\right).$$
	Since $y$ solves problem (\ref{Gmressol}), it follows that
	$$\mathscr{R}_m=\mathbb{V}_{m+1} \circledast\left(\gamma_{m+1}Q_me_{m+1}\right),$$
	where $\gamma_{m+1}$ is the last component of the vector $\beta Q_{m}^T e_{1}.$ Therefore,
	\begin{eqnarray*}
		\left\|\mathscr{R}_{m}\right\|_{F}&=&\left\|\mathbb{V}_{m+1} \circledast\left(\gamma_{m+1}Q_me_{m+1}\right)\right\|_F,\\
		&=&\left\|\gamma_{m+1}Q_me_{m+1}\right\|_2,\\
		&=&\left|\gamma_{m+1}\right|,
	\end{eqnarray*}
	which shows the results. 
\end{proof}

\noindent The tensor T-global GMRES method is summarized in the following algorithm

\begin{algorithm}[h!]
	\caption{Implementation of Tensor T-global GMRES(m)}\label{TG-GMRES(m)}
	\begin{enumerate}
		\item 	{\bf Input.} $\mathscr{A}\in \mathbb{R}^{n\times n \times n_3}$, $\mathscr{V},\mathscr{B},\mathscr{X}_{0}\in \mathbb{R}^{n\times s \times n_3}$, the maximum number of iteration  $\text{Iter}_{\text{max}} $ and a tolerance $tol>0$ .
		\item 	{\bf Output.} $  \mathscr{X}_{m}\in \mathbb{R}^{n\times s\times n_3}$ approximate  solution of the system  (\ref{syslintens}).
		\item $k=1,\ldots,\text{Iter}_{\text{max}}$
		\begin{enumerate}
			\item  Compute $\mathscr{R}_{0}=\mathscr{C}-\mathscr{A}\star\mathscr{X}_{0} $.
			\item  Apply Algorithm \ref{TGA} to compute  $\mathbb{V}_{m}$ and  ${ \widetilde{H}}_m$ .
			%			
			%			\item Calculate the T-QR decomposition of  $\mathscr{ \widetilde{H}}_m$ using algorithm (\ref{TQRsimple}) . 
			%			\item  Calculate $(\mathscr{R}_m)$ and $\mathscr{G}_m$
			%			using the relations (\ref{grm}),(\ref{grm1}).
			\item  Determine $\mu_{k}$  as the parameter minimizing the GCV function  given by (\ref{gcv2})
			\item  Compute the regularized solution $y_{m_k,\mu}$ of the problem \eqref{min}. 
			%	$(\mathscr{R}_m) \star \mathscr{Y}_m=   \mathscr{G}_m$ . 
			\item Compute the approximate solution  $\mathscr{X}_{m}=\mathscr{X}_{0}+ \mathbb{V}_{m} \circledast y_{m,\mu_k}  
			$
		\end{enumerate}
		\item If $\|\mathscr{R}_{m}\|_F<tol$, stop, else
		\item Set    $\mathscr{X}_{0}=\mathscr{X}_{m}$ and go to 3-a.
		\item End
	\end{enumerate}
\end{algorithm}

\noindent We turn now to the tensor T-global Golub Kahan approach for the solving the Tikhonov regularization of the problem (\ref{eq1}). Here, we apply the following Tikhonov regularization approach and solve the new problem

\begin{equation}\label{tikho3}
\underset{\mathscr{X}}{\text{min}}\{\|\mathscr{A} \star \mathscr{X} - \mathscr{C}  \|_F^2+\mu^{-1} \|\mathscr{X}\|_F^2\}.
\end{equation}
The  use of  $\mu^{-1} $
in (\ref{tikho3}) instead of $\mu$  will be justified below. In the what follows, we briefly review the discrepancy principle approach to determine a suitable regularization parameter, given an approximation of the norm of the additive error. We then assume that a bound $\varepsilon$ for $\|\mathscr{N}\|_F$ is available. This priori information suggests that $\mu$ has to be determined  as soon as
\begin{equation}\label{discrepancy}
\phi(\mu)\leq\eta\epsilon,
\end{equation}
where $\phi(\mu)=\|\mathscr{A} \star \mathscr{X} - \mathscr{C}  \|_F^2$ and  $ \eta\gtrapprox 1$ is refereed to  as the safety factor for the discrepancy principle. A zero-finding method can be used to solve (\ref{discrepancy}) in order to find a suitable regularization parameter which also implies that $\phi(\mu)$ has to be evaluated for several $\mu$-values. When the tensor  $\mathscr{A}$ is of moderate size, the quantity $\phi(\mu)$ can be easily evaluated. This evaluation becomes expensive when the matrix $\mathscr{A}$ is large, which means that its evaluation by a  zero-finding method can be very difficult and computationally expensive.  We will approximate $\phi$ to be able to determine an estimate of  $\|\mathscr{A} \star \mathscr{X} - \mathscr{C}  \|_F^2$.  Our approximation is obtained by using  T-global Golub-Kahan bidiagonalization (T-GGKB) and  Gauss-type quadrature rules. This connection provides approximations of moderate sizes to the quantity  $\phi$, and therefore gives a solution method to inexpensively solve (\ref{discrepancy}) by evaluating these small quantities that can successfully and inexpensively be employed to compute $\mu$ as well as defining a stopping criterion for  the T-GGKB iterations;  see \cite{belguide, belguide2} for discussion on this method.\\
Introduce the functions (of $\mu$)
\begin{eqnarray}\label{Gkfmu}
\mathcal{G}_m f_\mu&=&\|\mathscr{C}\|_F^2 e_1^T(\mu C_m C_m^T+I_m)^{-2}e_1,\\
{\mathcal R}_{m+1}f_\mu&=&\|\mathcal{C}\|_F^2 e_1^T(\mu \widetilde{C}_m\widetilde{C}_m^T+I_{m+1})^{-2}e_1;
\end{eqnarray}
The quantities   $\mathcal{G}_m f$ and ${\mathcal R}_{m+1}f_\mu$ are refereed to as   Gauss and Gauss-Radau quadrature rules, respectively, and can be obtained after $m$ steps of T-GGKB (Algorithm \ref{TG-GK}) applied to tensor $\mathscr{A}$ with initial tensor $\mathscr{C}$. These quantities  approximate $\phi(\mu)$ as follows
\begin{equation}
\mathcal{G}_m f_\mu\leq\phi(\mu)\leq{\mathcal R}_{m+1}f_\mu.
\end{equation}
Similarly to the approaches proposed  in \cite{belguide, belguide2}, we therefore instead  solve for $\mu$ the low dimensional nonlinear equation
\begin{equation}\label{lin22}
{\mathcal G}_m f_\mu=\epsilon^2.
\end{equation}
We apply the Newton's method to solve (\ref{lin22}) that requires repeated evaluation of the function ${\mathcal G}_m f_\mu$ and its derivative, which are inexpensive computations for small $m$.\\
We now comment on the use of  $\mu$ in (\ref{tikho3}) instead of $1 / \mu,$ implies that the left-hand side of (\ref{discrepancy}) is a decreasing convex function of $\mu .$ Therefore, there is a unique solution, denoted by $\mu_{\varepsilon},$ of
$$
\phi(\mu)=\varepsilon^{2}
$$
for almost all values of $\varepsilon>0$ of practical interest and therefore also of (\ref{lin22}) for $m$ sufficiently large; see \cite{belguide, belguide2} for analyses.  We accept $\mu_m$ that solve (\ref{discrepancy}) as an approximation of $\mu$, whenever we have
\begin{equation}\label{upperbd}
{\mathcal R}_{m+1}f_{\mu}\leq\eta^2\epsilon^2. 
\end{equation}
If (\ref{upperbd}) does not hold for $\mu_m$, we carry out one more GGKB steps, replacing $m$ by $m+1$ and solve the nonlinear equation
\begin{equation}\label{}
{\mathcal G}_{m+1}f_\mu=\epsilon^2;
\end{equation}
see \cite{belguide, belguide2} for more details. Assume now that (\ref{upperbd}) holds for some $\mu_m$. The corresponding regularized solution is then computed by
\begin{equation}\label{Xkmu}
\mathscr {X}_{m,\mu_m}=\mathbb{U}_m \circledast y_{m,\mu_m}, 
\end{equation}
where $y_{m,\mu_m}$ solves 
\begin{equation}\label{normeq2}
(\widetilde{C}_m^T\widetilde{C}_m+\mu_m^{-1} I_m)y=\beta_1\widetilde{C}_m^Te_1,\qquad\beta_1=\|\mathscr{C}\|_F.
\end{equation}
It is also computed by solving the least-squares problem
\begin{equation}\label{leastsq}
\min_{y\in\mathbb{R}^m} \begin{Vmatrix}
\begin{bmatrix}
\mu_m^{1/2}\widetilde{C}_m\\
I_m
\end{bmatrix}
y-\beta_1\mu_m^{1/2}e_1 \end{Vmatrix}_2.
\end{equation}The following result shows an important property of the approximate solution (\ref{Xkmu}). We include a proof for completeness.

\begin{proposition}
	Let $\mu_{m}$ solve (\ref{lin22}) and let $y_{m,\mu_m}$ solve (\ref{leastsq}). Then the associated approximate solution (\ref{Xkmu}) of (\ref{tikho3}) satisfies
	$$
	\left\|\mathscr{A}\ast \mathscr{X}_{m,\mu_m}-\mathscr{C}\right\|_{F}^{2}=R_{m+1} f_{\mu_{m}}.
	$$
\end{proposition}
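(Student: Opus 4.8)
The plan is to turn the tensor residual into a small $(m+1)$-dimensional matrix quantity via the Golub--Kahan relations of Proposition~\ref{proptggkb}, and then to evaluate that quantity by a rectangular resolvent identity.

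First I would use Proposition~\ref{proptggkb}, $\mathscr{A}\star\mathbb{U}_m=\mathbb{V}_{m+1}\circledast\widetilde{C}_m$, together with the linearity of $\circledast$ and the rule $(\mathbb{V}_{m+1}\circledast\widetilde{C}_m)\circledast y=\mathbb{V}_{m+1}\circledast(\widetilde{C}_m\,y)$, to get
$$\mathscr{A}\star\mathscr{X}_{m,\mu_m}=\mathscr{A}\star(\mathbb{U}_m\circledast y_{m,\mu_m})=\mathbb{V}_{m+1}\circledast(\widetilde{C}_m\,y_{m,\mu_m}).$$
Combining this with $\mathscr{C}=\mathbb{V}_{m+1}\circledast(\beta_1 e_1)$, $\beta_1=\|\mathscr{C}\|_F$ (which holds since the T-GGKB algorithm sets $\mathscr{V}_1=\mathscr{C}/\beta_1$, and was already used in Section~3), linearity gives $\mathscr{A}\star\mathscr{X}_{m,\mu_m}-\mathscr{C}=\mathbb{V}_{m+1}\circledast(\widetilde{C}_m\,y_{m,\mu_m}-\beta_1 e_1)$. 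Using the orthonormality of the $\mathscr{V}_i$ exactly as in Proposition~\ref{normfrobnorm2} (now with $m+1$ terms), the Frobenius norm collapses to
$$\|\mathscr{A}\star\mathscr{X}_{m,\mu_m}-\mathscr{C}\|_F^2=\|\widetilde{C}_m\,y_{m,\mu_m}-\beta_1 e_1\|_2^2.$$

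Next I would substitute the closed form of the minimizer. Since $y_{m,\mu_m}$ solves the normal equations (\ref{normeq2}), $y_{m,\mu_m}=\beta_1(\widetilde{C}_m^T\widetilde{C}_m+\mu_m^{-1}I_m)^{-1}\widetilde{C}_m^T e_1$, hence
$$\widetilde{C}_m\,y_{m,\mu_m}-\beta_1 e_1=\beta_1\bigl(\widetilde{C}_m(\widetilde{C}_m^T\widetilde{C}_m+\mu_m^{-1}I_m)^{-1}\widetilde{C}_m^T-I_{m+1}\bigr)e_1.$$
The crux is the rectangular ``push-through'' identity $\widetilde{C}_m(\widetilde{C}_m^T\widetilde{C}_m+\mu_m^{-1}I_m)^{-1}=(\widetilde{C}_m\widetilde{C}_m^T+\mu_m^{-1}I_{m+1})^{-1}\widetilde{C}_m$, which follows immediately from $(\widetilde{C}_m\widetilde{C}_m^T+\mu_m^{-1}I_{m+1})\widetilde{C}_m=\widetilde{C}_m(\widetilde{C}_m^T\widetilde{C}_m+\mu_m^{-1}I_m)$. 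Inserting it and simplifying gives
$$\widetilde{C}_m(\widetilde{C}_m^T\widetilde{C}_m+\mu_m^{-1}I_m)^{-1}\widetilde{C}_m^T-I_{m+1}=-\mu_m^{-1}(\widetilde{C}_m\widetilde{C}_m^T+\mu_m^{-1}I_{m+1})^{-1},$$
which is a symmetric matrix. Taking the squared Euclidean norm of its action on $\beta_1 e_1$ and factoring $\mu_m^{-1}$ out of the inverse yields
$$\|\widetilde{C}_m\,y_{m,\mu_m}-\beta_1 e_1\|_2^2=\beta_1^2\,\mu_m^{-2}\,e_1^T(\widetilde{C}_m\widetilde{C}_m^T+\mu_m^{-1}I_{m+1})^{-2}e_1=\|\mathscr{C}\|_F^2\,e_1^T(\mu_m\widetilde{C}_m\widetilde{C}_m^T+I_{m+1})^{-2}e_1,$$
and the right-hand side is exactly $R_{m+1}f_{\mu_m}$, which proves the claim. (Note that the hypothesis that $\mu_m$ solves (\ref{lin22}) is not actually used here: the identity holds for every $\mu>0$.)

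I expect the only non-routine ingredient to be the rectangular push-through identity relating $(\widetilde{C}_m^T\widetilde{C}_m+\mu_m^{-1}I_m)^{-1}$ to $(\widetilde{C}_m\widetilde{C}_m^T+\mu_m^{-1}I_{m+1})^{-1}$; once it is available, everything reduces to bookkeeping with the $\circledast$ calculus of Propositions~\ref{normfrobnorm2}--\ref{proptggkb}.
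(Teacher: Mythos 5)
Your proof is correct and follows essentially the same route as the paper: reduce the tensor residual to $\|\widetilde{C}_m y_{m,\mu_m}-\beta_1 e_1\|_2$ via Proposition~\ref{proptggkb} and the orthonormality of the $\mathscr{V}_i$, then substitute the normal-equations form of $y_{m,\mu_m}$ and simplify to $\beta_1^2 e_1^T(\mu_m\widetilde{C}_m\widetilde{C}_m^T+I_{m+1})^{-2}e_1=R_{m+1}f_{\mu_m}$. The only cosmetic difference is that the paper invokes the identity $I-A(A^TA+\mu^{-1}I)^{-1}A^T=(\mu AA^T+I)^{-1}$ directly, whereas you derive it from the push-through identity; your remark that solving (\ref{lin22}) is not actually needed for the identity is also accurate.
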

\begin{proof} The representation of Proposition \ref{proptggkb} show that
	$$
	\mathscr{A}\ast \mathscr{X}_{m,\mu_m}=(\mathscr{A}\ast \mathbb{U}_{m})\circledast y_{m,\mu_m}= \mathbb{V}_{m+1}\circledast(\widetilde{C}_m y_{m,\mu_m}). 
	$$
	
	Using the above expression gives
	
	$$
	\begin{aligned}
	\left\|\mathscr{A}\star \mathscr{X}_{m,\mu_m}-\mathscr{C}\right\|_{F}^{2}&=\left\|\mathbb{V}_{m+1}\circledast(\widetilde{C}_m y_{m,\mu_m})-\beta_1\mathscr{V}_1\right\|_{F}^{2},  \\
	&=\left\|\mathbb{V}_{m+1}\circledast(\widetilde{C}_m y_{m,\mu_m})-\mathbb{V}_{m+1}\circledast(\beta_1e_1)\right\|_{F}^{2}, \\
	&=\left\|\mathbb{V}_{m+1}\circledast\left(\widetilde{C}_m y_{m,\mu_m}-\beta_1e_1\right)\right\|_{F}^{2}, \\
	&=\left\|\widetilde{C}_\ell y_{m,\mu_m}-\beta_1e_1\right\|_{2}^{2}.
	\end{aligned}$$
	where we recall that $\beta_1=\|\mathcal{C}\|_{F}$. We now express $y_{m,\mu_m}$ with the aid of  (\ref{normeq2}) and apply the following  identity 
	$$I-A\left(A^{T} A+\mu^{-1} I\right)^{-1} A^{T}=\left(\mu A A^{T}+I\right)^{-1}$$
	with $A$ replaced by $\widetilde{C}_{m},$ to obtain
	$$
	\begin{aligned}
	\left\|\mathscr{A}\ast \mathscr{X}_{m,\mu_m}-\mathscr{C}\right\|_{F}^{2} &=\beta_1^{2}\left\|e_{1}-\widetilde{C}_{m}\left(\widetilde{C}_{m}^{T} \widetilde{C}_{m}+\mu_{m}^{-1} I_{m}\right)^{-1} \widetilde{C}_{m}^{T} e_{1}\right\|_{F}^{2}, \\
	&=\beta_1^{2} e_{1}^{T}\left(\mu_{m} \widetilde{C}_{m} \widetilde{C}_{m}^{T}+I_{m+1}\right)^{-2} e_{1}, \\
	&=R_{m+1} f_{\mu_{m}}.
	\end{aligned}
	$$
\end{proof}

\noindent  The following algorithm summarizes the main steps to compute a regularization parameter and a corresponding regularized solution of (\ref{eq1}), using  Tensor T-GGKB and quadrature rules method for Tikhonov regularization.

\begin{algorithm}[!h]
	\caption{ Tensor T-GGKB and quadrature rules method for Tikhonov regularization}\label{TG-GKB}
	\begin{enumerate}
		\item {\bf Input.} $\mathscr{A}\in \mathbb{R}^{n\times n \times n_3}$,  $\mathscr {C}$, $\eta\gtrapprox 1$  and  $\varepsilon$.
		\item {\bf Output.} T-GGKB steps $m$, $\mu_{m}$ and $X_{m,\mu_m}$.
		\item Determine the orthonormal bases $\mathbb{U}_{m+1}$ and $\mathbb{V}_{m}$ of tensors, and the bidiagonal $C_m$ and $\widetilde{C}_m$
		matrices  with Algorithm \ref{TG-GK}.
		\item Determine $\mu_{m}$ that satisfies (\ref{lin22}) with Newton's method.
		\item  Determine $y_{m,\mu_m}$ by solving  (\ref{leastsq}) and then compute $X_{m,\mu_m}$ by (\ref{Xkmu}).
	\end{enumerate}
\end{algorithm}
\section{Numerical results}
This section performs some numerical tests on the methods of Tensor T-Global GMRES(m) and Tensor T-Global Golub Kahan algorithm given by Algorithm \ref{TG-GMRES(m)} and Algorithm \ref{TG-GKB}, rspectively, 
when applied to the restoration of blurred and noisy color images and videos. For clarity, we only focus on the formulation of a tensor model (\ref{tr1}), describing the blurring that is taking place in the process of going from the exact to the blurred RGB image.  We recall that an RGB image is just multidimensional array of dimension $m\times n\times 3$ whose entries are the light intensity. Throughout this section, we assume that the the three channels
of the RGB image has the same dimensions, and we refer to it as $n\times n\times 3$ tensor. Let $\widehat{X}^{(1)}$, $\widehat{X}^{(2)}$, and $\widehat{X}^{(3)}$ be the $n\times n$ matrices that constitute the three channels of the original error-free color image $\widehat{\mathscr{X}}$, and $\widehat{C}^{(1)}$, $\widehat{C}^{(2)}$, and $\widehat{C}^{(3)}$ the $n\times n$ matrices associated with error-free blurred color image $\widehat{\mathscr{C}}$. Because of some
unique features in images, we seek an image restoration model that utilizes blur information,
exploiting the spatially invariant properties. Let us also consider that both cross-channel and within-channel blurring  take place in the blurring process of the original image. Let $\tt{vec}$ be the operator  that
transforms a matrix  to a vector  by stacking the columns of the matrix from
left to right. Then, the full blurring model is described by the following
form 
\begin{equation}\label{linmodel}
\left(\mathbf{A}_{\text {color }} \otimes \mathbf{A^{(1)}}\otimes\mathbf{A^{(2)}}\right) \widehat{\mathbf{x}}=\widehat{\mathbf{c}},
\end{equation}
where, 
$$ \widehat{\mathbf{c}}=\left[\begin{array}{c}
\tt{vec}\left(\widehat{\mathbf{C}}^{(1)}\right) \\
\tt{vec}\left(\widehat{\mathbf{C}}^{(2)}\right) \\
\tt{vec}\left(\widehat{\mathbf{C}}^{(3)}\right)
\end{array}\right], \quad \widehat{\mathbf{x}}=\left[\begin{array}{c}
\tt{vec}\left(\widehat{\mathbf{X}}^{(1)}\right) \\
\tt{vec}\left(\widehat{\mathbf{X}}^{(2)}\right) \\
\tt{vec}\left(\widehat{\mathbf{X}}^{(3)}\right)
\end{array}\right], $$
and 

$$\mathbf{A}_{\mathrm{color}}=\left[\begin{array}{ccc}
\alpha & \gamma& \beta \\
\beta & \alpha & \gamma \\
\gamma & \beta & \alpha
\end{array}\right]$$
$\mathbf{A}_{\text {color }}$ is the $3\times3$ matrix that models the  cross-channel blurring, where each row sums to one.  $\mathbf{A^{(1)}}\in\mathbb{R}^{n\times n}$ and $\mathbf{A^{(2)}}\in\mathbb{R}^{n\times n}$ define within-channel blurring and they model
the horizontal within blurring and the vertical  within blurring matrices, respectively; for more details see \cite{HNO}. The notation $\otimes$ denotes the Kronecker product of  matrices; i.e. the Kronecker product of a $n\times p$  matrix $A=(a_{ij})$  and a $(s\times q)$ matrix
$B=(b_{ij})$,   is defined as the $(ns)\times(pq)$ matrix $A \otimes B = (a_{ij}B)$. By exploiting  the circulant structure  of the cross-channel blurring matrix $\mathbf{A}_{\text {color }}$ and the operators unfold and fold, it can be easily shown that (\ref{linmodel}) can be written in the following tensor form
\begin{equation}
\mathscr {A}\star \widehat{\mathscr{X}}\star \mathscr{B}= \widehat{\mathscr{C}},
\end{equation}
where $\mathscr {A}$ is a 3-way tensor such that  $\mathscr {A}(:,:,1)=\alpha \mathbf{A^{(2)}}$, $\mathscr {A}(:,:,2)=\beta \mathbf{A^{(2)}}$ and $\mathscr {A}(:,:,3)=\gamma \mathbf{A^{(2)}}$ and  $\mathscr {B}$ is a 3-way tensor with $\mathscr {B}(:,:,1)=(\mathbf{A^{(1)}})^T$, $\mathscr {B}(:,:,2)=0$ and $\mathscr {B}(:,:,3)=0$. To test the performance of algorithms, the within blurring matrices $A^{(i)}$  have the following
entries 
$$a_{k \ell}=\left\{\begin{array}{ll}
\frac{1}{\sigma \sqrt{2 \pi}} \exp \left(-\frac{(k-\ell)^{2}}{2 \sigma^{2}}\right), & |k-\ell| \leq r \\
0, & \text { otherwise }
\end{array}\right.$$
Note  that $\sigma$ controls the amount of smoothing, i.e. the larger the $\sigma$,  the more ill posed the problem.  We generated a blurred and noisy tensor image $\mathscr{C}=\widehat{\mathscr{C}}+\mathscr{N},$ where $\mathscr{N}$ is a noise tensor with normally distributed random entries with zero mean and with variance chosen to correspond to a specific noise level $\nu:=\|\mathscr{N}\|_F /\|\widehat{\mathscr{C}}\|_F.$
To determine the effectiveness of our solution methods, we evaluate 
$$\text{Relative error}=\frac{\left\|\hat{ \mathscr X}-{\mathscr  X}_{\text{restored}}\right\|_{F}}{\|\widehat{ \mathscr X}\|_{F}}$$
and the Signal-to-Noise
Ratio (SNR) defined by
\[\text{SNR}({ \mathscr X}_{\text{restored}})=10\text{log}_{10}\frac{\|\widehat{\mathscr X}-E(\widehat{\mathscr X})\|_F^2}{\|{\mathscr X}_{\text{restored}}-\widehat{\mathscr X}\|_F^2},\]
where $E(\widehat{\mathscr X})$ denotes the mean gray-level of the uncontaminated image $\widehat{\mathscr{X}}$. 
All computations were carried out using the MATLAB environment on an Intel(R) Core(TM) i7-8550U CPU @ 1.80GHz (8 CPUs) computer with 12 GB of
RAM. The computations were done with approximately 15 decimal digits of relative
accuracy. 
\subsection{Example 1}
In this example we present the experimental results recovered by Algorithm \ref{TG-GMRES(m)} and Algorithm \ref{TG-GKB} for the reconstruction of a cross-channel blurred color images that have been contaminated by both within and cross blur, and additive noise. The cross-channel blurring is determined by the matrix 
$$\mathbf{A}_{\mathrm{color}}=\left[\begin{array}{ccc}
0.8 & 0.10& 0.10 \\
0.10 & 0.80 & 0.10 \\
0.10& 0.10 & 0.80
\end{array}\right].$$
We consider two $\mathrm{RGB}$ images from \textbf{MATLAB}, $\tt papav256$ ($\widehat{\mathscr X}\in\mathbb{R}^{256\times256\times3}$) and  $\tt peppers$ ($\widehat{\mathscr X}\in\mathbb{R}^{512\times512\times3}$). They are shown on  Figure \ref{fig1}. For the within-channel blurring,  we let $\sigma=4$ and $r=6$. The considered  noise levels are $\nu=10^{-3}$ and $\nu=10^{-2}$. The associated blurred and noisy RGB images $\mathscr{C}=\mathscr{A}\ast\widehat{\mathscr{X}}\ast\mathscr{B}+\mathscr{N}$ for noise level $\nu=10^{-3}$ are shown on  Figure \ref{fig2}.  Given the contaminated RGB image $\mathscr{C}$, we would like to recover an approximation of the original RGB image $\widehat{\mathscr X}$.   The restorations for  noise level $\nu=10^{-3}$ are shown on  Figure \ref{fig3} and they are obtained by applying Algorithm \ref{TG-GMRES(m)} implementing the  Tensor T-Global GMRES method, with  $\mathscr{X}_0=\mathscr{O}$, $tol=10^{-6}$, $m=10$ and $\text{Iter}_\text{max}=10$. Using GCV, the computed optimal value for the projected problem  was $\mu_{10}=3.82\times 10^{-5}.$ Table \ref{tab1} compares, the computing time (in seconds),  the relative errors and the SNR of the computed restorations. Note that in this table, the allowed maximum number of outer iterations for Algorithm \ref{TG-GMRES(m)}  with noise level $\nu=10^{-2}$ was $\text{Iter}_\text{max}=4$ and the maximum number of inner iterations was $m=4$.  The restorations obtained with Algorithm \ref{TG-GKB}  are shown on  Figure \ref{fig4}. For the $\tt papav256$ color image, the discrepancy principle with $\eta=1.1$ is satisfied when $m=64$ steps of the Tensor T-GGKB  method (Algorithm \ref{TG-GK}) have been carried out, producing a regularization parameter given by $\mu_m=5.57\times 10^{-5}$. For comparison with existing approaches in the literature, we report in Table \ref{tab1} the results obtained with the method 
proposed in \cite{CR}. This method utilizes the connection between (standard) 
Golub--Kahan bidiagonalization and Gauss quadrature rules for solving large 
ill-conditioned linear systems of equations (\ref{linmodel}). We refer to this method as
GKB. It determines the regularization parameter analogously to Algorithm \ref{TG-GKB}, and uses a similar stopping criterion. We can see that the
methods yield restorations of the same quality, but the new proposed methods perform significantly better in terms of cpu-time.
\begin{table}[htbp]
	\begin{center}\footnotesize
		\renewcommand{\arraystretch}{1.3}
		\begin{tabular}{cccccc}\hline
			\multicolumn{1}{c}{{RGB images}} &\multicolumn{1}{c}{{Noise level}} & \multicolumn{1}{c}{{Method}}& 
			\multicolumn{1}{c}{{SNR}} & Relative error& CPU-time (sec) \\ 
			\hline 
			\multirow{6}{*}{$\tt papav256$}&\multirow{4}{3em}{$10^{-3}$}&Algorithm \ref{TG-GMRES(m)} &$21.01$&$6.64\times10^{-2}$&$\phantom{1}6.62$\\
			&&Algorithm \ref{TG-GKB}&$20.41$& $7.12\times10^{-2}$&$\phantom{1}5.87$\\
			&&GKB&$20.99$&$7.12\times10^{-2}$&$18.61$\\
			\cline{2-6}
			&\multirow{4}{3em}{$10^{-2}$}&Algorithm \ref{TG-GMRES(m)}&$18.00$&$9.40\times10^{-2}$&$\phantom{1}1.18$\\
			&	&Algorithm \ref{TG-GKB}&$17.78$&$9.64\times10^{-2}$&$\phantom{1}1.11$\\
			&&GKB&$17.78$&$9.64\times10^{-2}$&$5.79$\\
			\hline 
			\multirow{6}{*}{$\tt peppers$}&\multirow{4}{3em}{$10^{-3}$}&Algorithm \ref{TG-GMRES(m)} &$19.39$&$5.50\times10^{-2}$&$\phantom{1}24.32$\\
			&&Algorithm \ref{TG-GKB}&$19.11$& $5.68\times10^{-2}$&$\phantom{1}25.63$\\
			&&GKB&$19.11$&$5.68\times10^{-2}$&$78.13$\\
			\cline{2-6}
			&\multirow{4}{3em}{$10^{-2}$}&Algorithm \ref{TG-GMRES(m)}&$16.23$&$7.92\times10^{-2}$&$\phantom{1}4.59$\\
			&	&Algorithm \ref{TG-GKB}&$15.61$&$8.50\times10^{-2}$&$\phantom{1}3.39$\\
			&&GKB&$15.61$&$8.50\times10^{-2}$&$15.16$\\
			\hline 
		\end{tabular}
		\caption{Results for Example 1.}\label{tab1}
	\end{center}
\end{table}
\begin{figure}
	\begin{center}
		\includegraphics[width=5in]{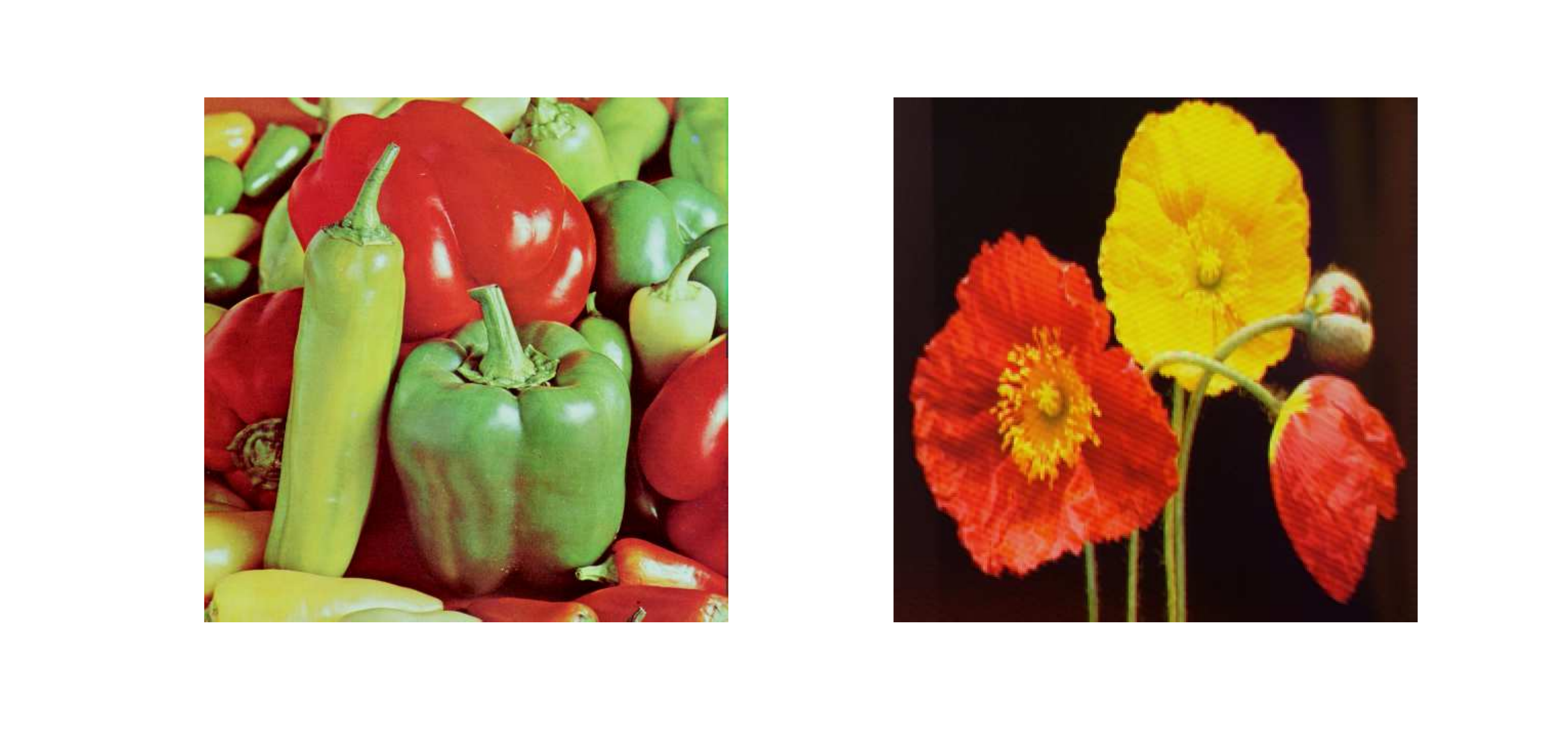}
		\caption{Example 1: Original RGB images:  $\tt peppers$ (left), $\tt papav256$ (right).}\label{fig1}
	\end{center}
\end{figure}

\begin{figure}
	\begin{center}
		\includegraphics[width=5in]{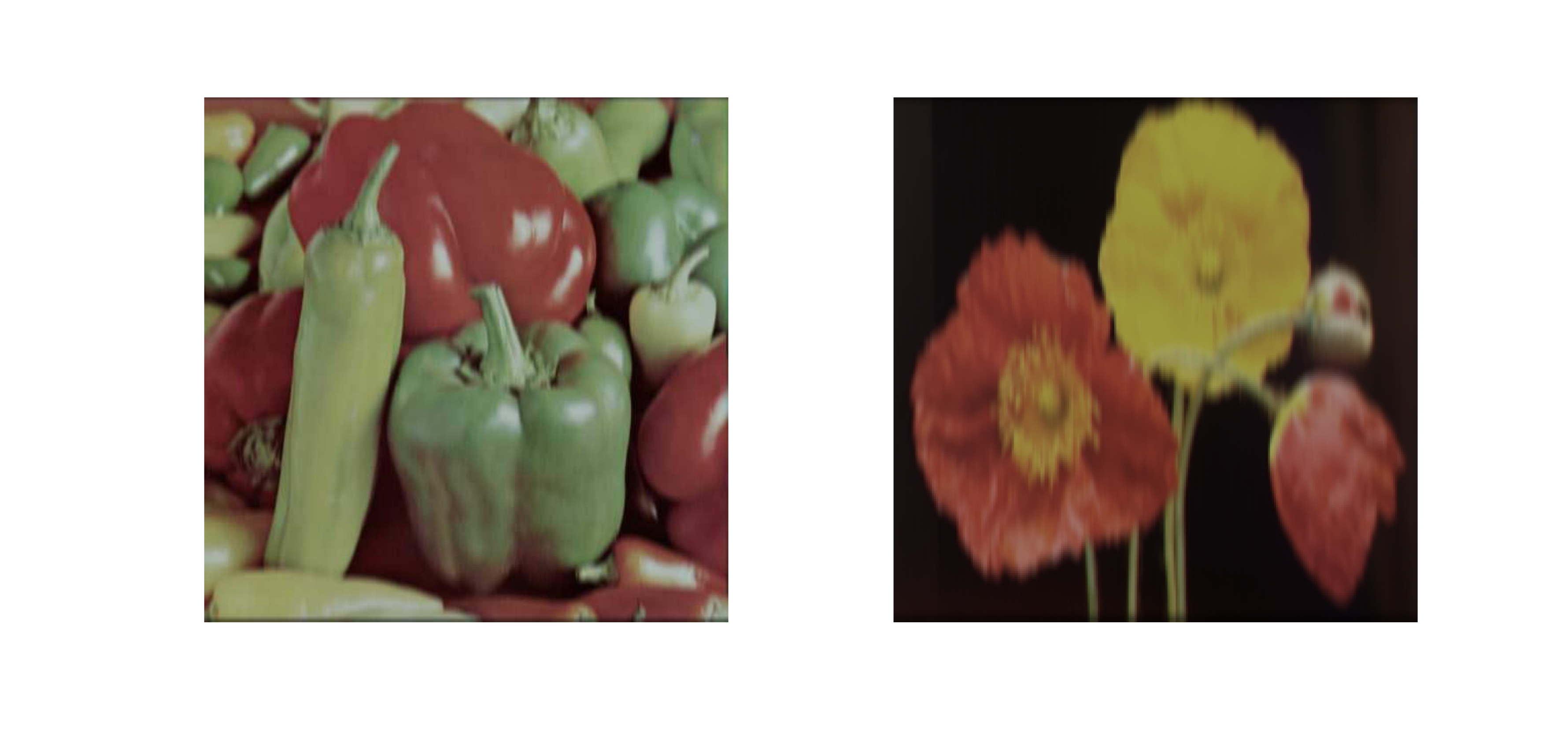}
		\caption{Example 1: Blurred and noisy images, $\tt peppers$ (left), $\tt papav256$ (right).}\label{fig2}
	\end{center}
\end{figure}
\begin{figure}
	\begin{center}
		\includegraphics[width=5in]{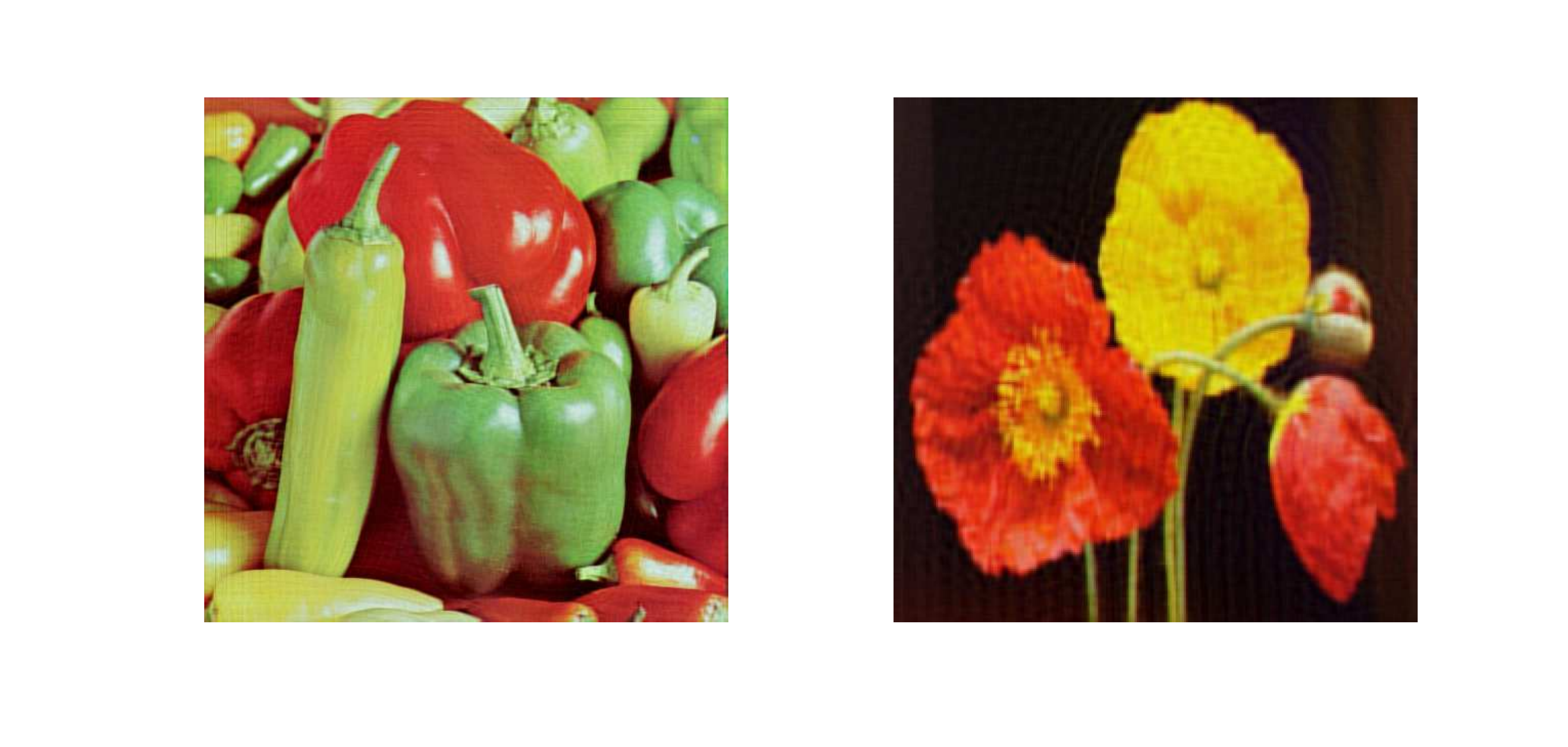}
		\caption{Example 1: Restored images by Algorithm \ref{TG-GMRES(m)}, $\tt peppers$ (left), $\tt papav256$ (right).}\label{fig3}
	\end{center}
\end{figure}
\begin{figure}
	\begin{center}
		\includegraphics[width=5in]{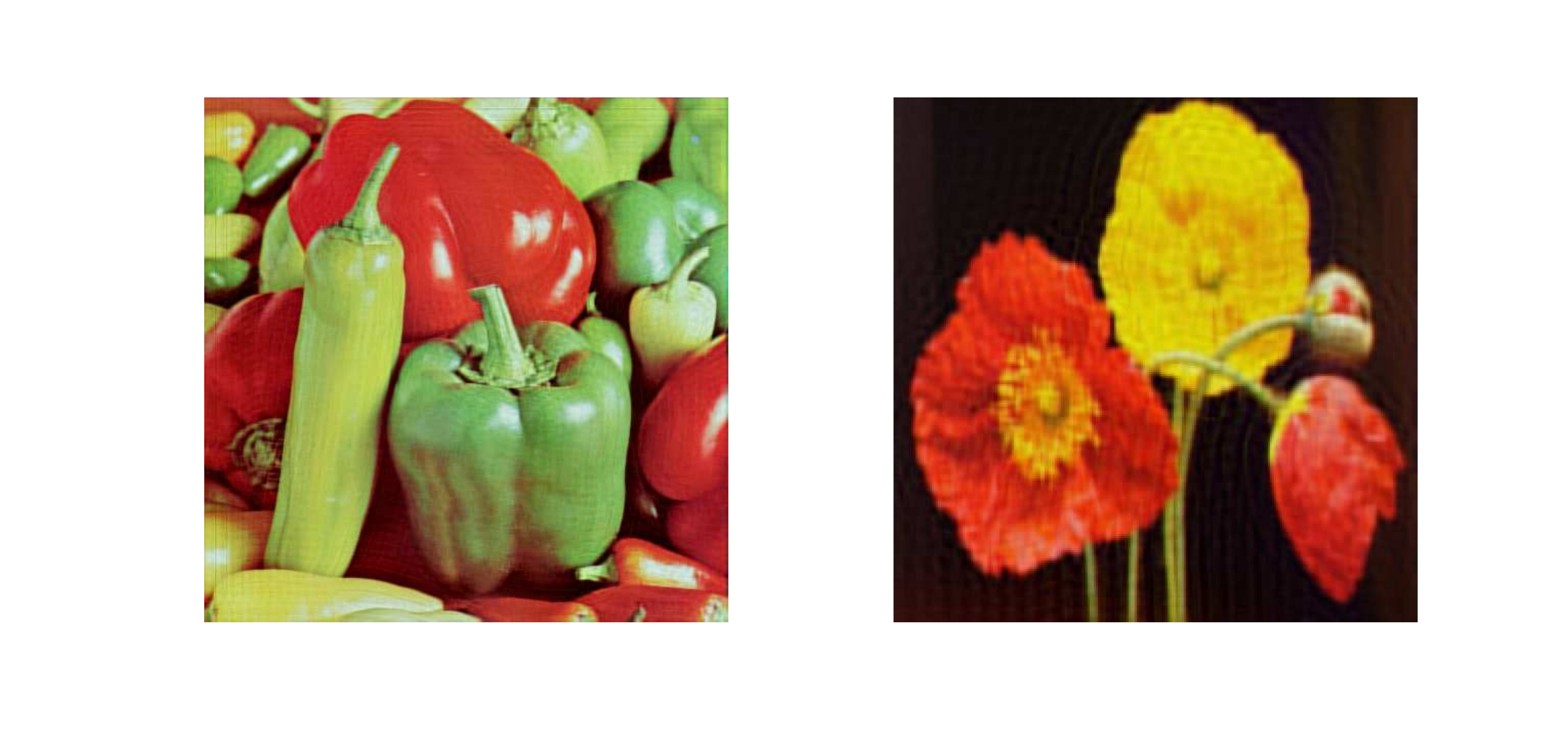}
		\caption{Example 1: Restored images by Algorithm \ref{TG-GKB}, $\tt peppers$ (left), $\tt papav256$ (right).}\label{fig4}
	\end{center}
\end{figure}
\subsection{Example 2} 
In this example, we evaluate the effectiveness of Algorithm \ref{TG-GMRES(m)} and Algorithm \ref{TG-GKB} when applied to the restoration of a color video defined by a sequence of RGB images. Video restoration is the problem of restoring a sequence of $k$ color images (frames). Each frame is represented by a tensor of $n \times n\times3$ pixels. In the present example, we are interested in restoring 10 consecutive frames of a contaminated video. Note that the processing of such given frames, one  at a time, is extremely time consuming. We consider the xylophone video from MATLAB. The video clip is in MP4 format with each frame having $240 \times 240$ pixels. The (unknown) blur- and noise-free frames are stored in the tensor $\widehat{ \mathscr X} \in \mathbb{R}^{240 \times 240\times30}$, obtained by stacking the grayscale images that constitute the three channels of
each blurred color frame. These frames are blurred by $\mathscr {A}\star \widehat{\mathscr{X}}\star \mathscr{B}= \widehat{\mathscr{C}}$, where  $\mathscr {A}$ and $\mathscr {B}$  are a 3-way tensors such that  $\mathscr {A}(:,:,1)= \mathbf{A^{(2)}}$, $\mathscr {B}(:,:,1)= (\mathbf{A^{(1)}})^T$ and $\mathscr {A}(:,:,i)=\mathscr {B}(:,:,i)=0$, for $i=2,...,30$,  using $\sigma=2$ and $r=4$ to build the blurring matrices. We consider white Gaussian noise of levels $\nu=10^{-3}$ or $\nu=10^{-2}$. Figure \ref{frame5ob} shows the 5th exact (original) frame and the contaminated version with noise level $\nu=10^{-3}$, which is to be restored. Table \ref{tab2} displays the performance of Algorithm \ref{TG-GMRES(m)} and Algorithm \ref{TG-GKB}. In Algorithm \ref{TG-GMRES(m)}, we have used as an input for noise level $\nu=10^{-3}$,  $\mathscr{C}$, $\mathscr{X}_0=\mathscr{O}$, $tol=10^{-6}$, $m=10$ and $\text{Iter}_{\text{max}}=10$.The chosen inner and outer iterations for noise level $\nu=10^{-2}$ were $m=4$ and  $\text{Iter}_{\text{max}}=4$, respectively. For the ten outer iterations,  minimizing the GCV function  produces  $\mu_{10}=1.15 \times 10^{-5}$. Using Algorithm \ref{TG-GKB},  the discrepancy principle with $\eta=1.1$ have been satisfied after $m=59$ steps of  T-GGKB method (Algorithm \ref{TG-GK}), producing a regularization parameter given by $\mu_m=1.06\times10^{-4}$. For completeness, the restorations obtained with Algorithm \ref{TG-GMRES(m)} and Algorithm \ref{TG-GKB} are shown on the left-hand and the right-hand side of Figure \ref{frame5r}, respectively.
\begin{table}[htbp]
	\caption{Results for Example 2.}\label{tab2}
	\begin{center}
		\begin{tabular}{lcccc}
			\hline Noise level & Method & SNR & Relative error & CPU-time (second) \\
			\hline {$10^{-3}$}& Algorithm \ref{TG-GMRES(m)} & 19.97 & $4.07 \times 10^{-2}$ & 35.68 \\  &  Algorithm \ref{TG-GKB} & 19.24 & $4.43 \times 10^{-2}$ & 25.52 \\
			\hline  {$10^{-2}$} & Algorithm \ref{TG-GMRES(m)}  & 15.17 & $7.08 \times 10^{-2}$ & 6.12 \\
			&Algorithm \ref{TG-GKB} & 15.13 & $7.11 \times 10^{-2}$ & 4.40 \\
			\hline
		\end{tabular}
	\end{center}
\end{table}
\begin{figure}
	\begin{center}
		\includegraphics[width=5in]{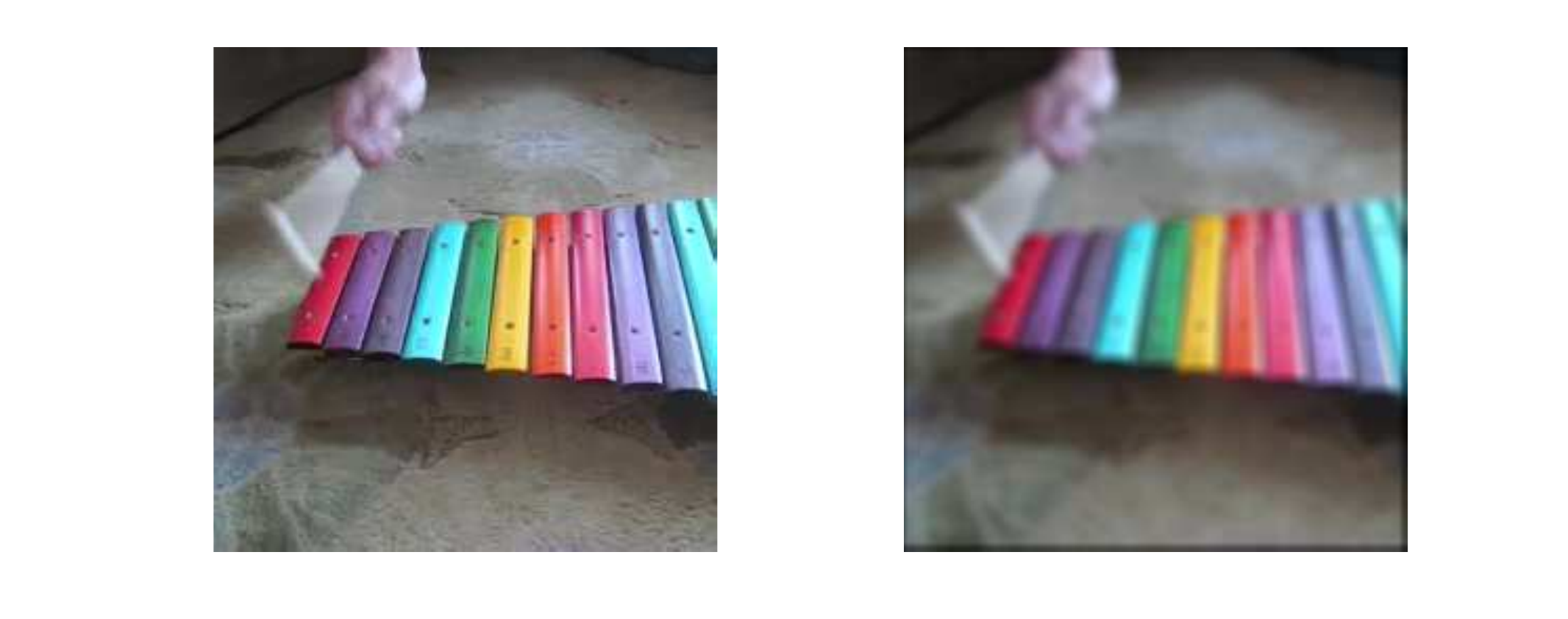}
		\caption{Example 2: Original frame no. 5  (left), blurred and noisy frame no. 5  (right).
		}\label{frame5ob}
	\end{center}
\end{figure}
\begin{figure}
	\begin{center}
		\includegraphics[width=5in]{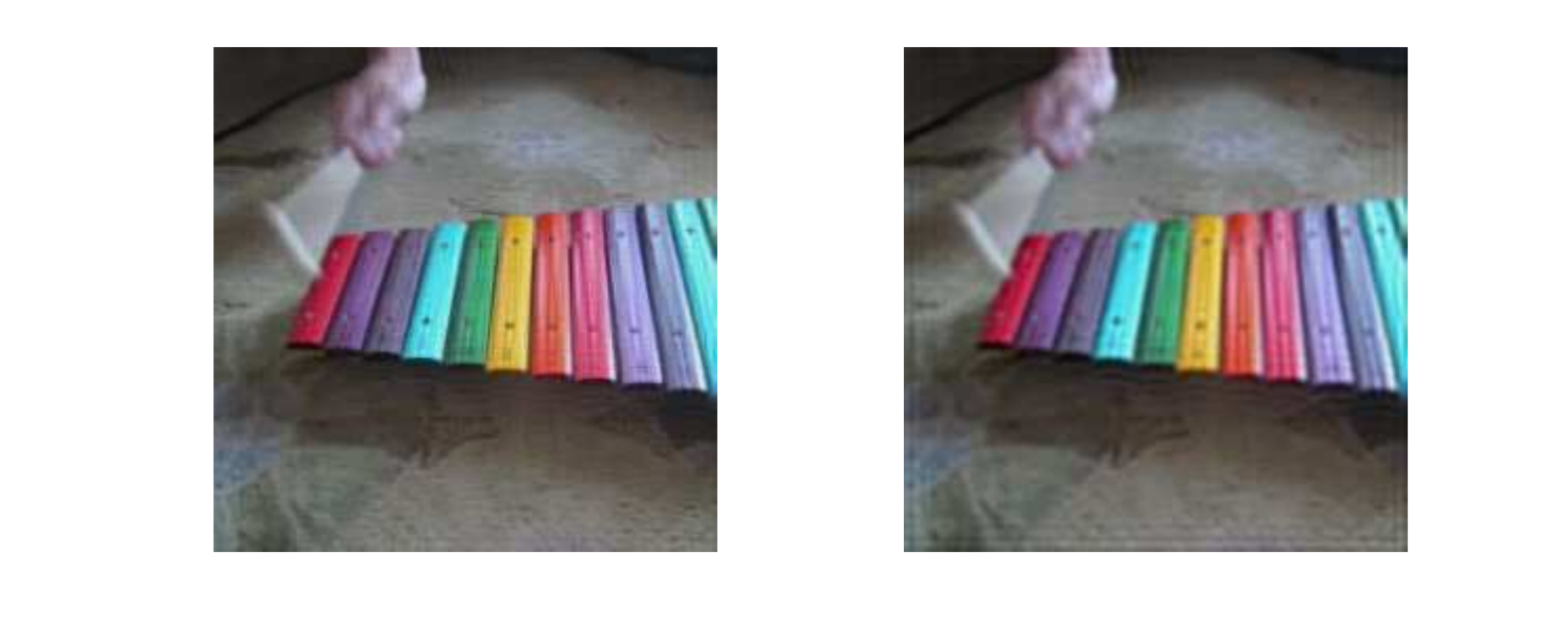}
		\caption{Example 2: restored frame no. 5 by Algorithm \ref{TG-GMRES(m)}  (left), and restored frame no. 5 by Algorithm \ref{TG-GKB} 
			(right).}\label{frame5r}
	\end{center}
\end{figure}
\section{Conclusion} 
In this paper we have proposed tensor version of GMRES and Golub–Kahan bidiagonalization
algorithms using the T-product, with applications to  solving  large-scale linear tensor equations arising in the reconstructions of blurred and noisy multichannel images and videos. The numerical experiments that we have performed  show the
effectiveness of the proposed schemes to inexpensively computing regularized solutions of high quality.
\bibliographystyle{siam}
\bibliography{TMatrix_2020}  
\end{document}